\newfont{\handw}{cmmi10 scaled 1200}
\newtheorem{Prop}{Proposition}[section]
\newtheorem{Lem}[Prop]{Lemma}
\newtheorem{Th}[Prop]{Theorem}
\newtheorem{Rm}[Prop]{Remark}
\newtheorem{Def}[Prop]{Definition}
\newtheorem{Ex}[Prop]{Example}
\newtheorem{Cor}[Prop]{Corollary}
\newtheorem{As}[Prop]{Assumption} 
\newtheorem{Proper}[Prop]{Properties} 
\newfont{\smcal}{cmu10 scaled 1200}
\newcommand{\grad}{\operatorname {grad}}
\newcommand{\diag}{\operatorname {diag}}
\newcommand{\E}{\operatorname {\mathbb E}}
\newcommand{\Prb}{\operatorname {\mathbb P}}
\newcommand{\RR}{\operatorname {\mathbb R}}
\newcommand{\NN}{\operatorname {\mathbb N}}
\newcommand{\Hess}{\operatorname {Hess}}
\newcommand{\fC}{\mathfrak{C}}
\newcommand\Comment[1]{{$\star$\sf\textcolor{red}{#1}$\star$}}
\begin{document}
      \title{Zero Probability of the Cut Locus of a Fr\'echet Mean on a Riemannian Manifold }
   \author{Alexander Lytchak and Stephan F. Huckemann} 
    \maketitle

\begin{abstract} 

	 We show that the cut locus of a Fr\'echet mean of a random variable on a connected and complete Riemanian manifold has zero probability, a result known previously in special cases and conjectured in general.  In application, we rule out stickiness, while providing examples of nowhere smooth Fr\'echet functions and we discuss 
     extensions of the statement to Fr\'echet $p$-means, for $p\neq 2$, as well as  to noncomplete manifolds and more general metric spaces.
\end{abstract}
\par
\vspace{9pt}
\noindent {\it Key words and phrases:} Semiconcave functions, Laplacians bounded in the barrier sense, Riemann exponential, barycenters 

\par
\vspace{9pt}
\noindent {\it AMS 2000 Subject Classification:} \begin{minipage}[t]{6cm}
Primary 60D05\\ Secondary 53C20
 \end{minipage}
\par

\section{Introduction, Previous Work and Statement of Result} 

The Fr\'echet mean of a random variable $X$ on a metric space is a generalization of the expected value $\E$ of random vector in a Euclidean space \citep{F48}. If the metric space is a connected, complete Riemannian manifold and if the cut locus of a Fr\'echet mean $\mu$ carries zero probability, $X$ can be mapped a.s. under the inverse Riemann exponential $\exp_{\mu}^{-1}$ to the tangent space at $\mu$, resulting in 
\begin{eqnarray}\label{eq:zero-expection}\E[\exp_{\mu}^{-1}X] &=&0\,,\end{eqnarray}
see, e.g. \cite{Pn06}. This fact allows, among others, to apply the powerful toolbox of linear (Euclidean) statistics and a rather quick iterative computation of Fr\'echet means. 
Various applications use manifolds for data representation and statistical analysis, and if these manifolds are compact, they feature nonempty cut loci, e.g. real and complex projective spaces for projective shape and two-dimensional similarity shape, respectively, Grassmanians for affine shape, tori for biomolecular structure, or nonstandard spherical metrics for brain cortex anatomy (overviews in, e.g., \cite{braitenberg2013anatomy,DM16,marron2021object}).

The condition that the cut locus carries zero probability  has been stipulated as an extra condition, e.g. by \cite{KendallLe2011} and verified for the circle $M=\mathbb S^1$ by \cite{HH11}.
By \cite{HH11}  this was formulated as a general conjecture, which was verified by  \cite{LeBarden2014} in a special case, see below for details. Consequences for central limit theorems have been studied by \cite{BL17}, \cite{EltznerGalazHuckemannTuschmann21} 
and  
\cite{hotz2024central}.


Here we prove the assertion in full generality. 

\begin{As}\label{as:global}
 Throughout this paper, $M$ is a finite dimensional, connected, complete, smooth Riemannian manifold with geodesic distance 
 $d(q,x)$ for $q,x\in M$ and we set $d_q(x)=d_x(q):=d(q,x)$. Further, $X$ is a random variable on $M$ such that for one $q\in M$, and hence for all $q\in M$, second moments exist:
 $$ \int_M d_x(q)^2\,d\Prb^X(x) < \infty\,.$$
 \end{As}

We refer the reader to Section \ref{sec: cut} below and 
 to \citet[Chapter II.4]{Sakai1996}, for the more common equivalent definition of the cut locus and its further properties.

\begin{Def} \label{def: cut}
For $q\in M$ we denote by $\fC_q^+$ the set of points $x\in M$ which are connected to $q$ by at least two shortest geodesics. 

The cut locus $\fC_q$  is the closure of $\fC_q^+$ in $M$.
\end{Def}

\begin{Def}
The Fr\'echet function
is given by
$$ F(q):=  \int_M d^2_x(q)\,d\Prb^X(x)\,,\quad q \in M$$
and every minimizer of $F$ is called a Fr\'echet mean. 
\end{Def}

Here is our main result.

\begin{Th}\label{th:cpzero} If $\mu \in M$ is a Fr\'echet mean then $\Prb\{X \in \fC_\mu\} =0$.
\end{Th}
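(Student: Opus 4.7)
The plan is to combine the semiconcavity of the Fr\'echet function $F$ with the second-order optimality of $\mu$ and show that, for $\Prb^X$-almost every $y$, the squared distance $d_y^2$ behaves at $\mu$ like a $C^2$ function, which forces $y\notin\fC_\mu$.

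Each $d_y^2$ is locally semiconcave with a constant depending only on $d(y,\mu)$ and local curvature bounds, and by averaging so is $F$. In particular the second-order symmetric difference quotient $Q_v(t,y):=\bigl(d_y^2(\exp_\mu(tv))+d_y^2(\exp_\mu(-tv))-2d_y^2(\mu)\bigr)/t^2$ satisfies an upper bound $Q_v(t,y)\le C(y)|v|^2$ with $\int C(y)\,d\Prb^X(y)<\infty$, thanks to the second-moment assumption. Integrating in $y$ gives the analogous quantity $Q^F_v(t):=\int Q_v(t,y)\,d\Prb^X(y)$ for $F$, and the minimum condition at $\mu$ yields $Q^F_v(t)\ge 0$ for all $t>0$ and $v\in T_\mu M$. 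Applying reverse Fatou with the integrable upper bound then gives $0\le\limsup_{t\downarrow 0}Q^F_v(t)\le\int\limsup_{t\downarrow 0}Q_v(t,y)\,d\Prb^X(y)$.

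The next step is to evaluate this limsup pointwise in $y$. If $y\notin\fC_\mu$ then $d_y^2$ is $C^2$ near $\mu$ and $\limsup_{t\downarrow 0}Q_v(t,y)=\Hess d_y^2(\mu)(v,v)$, which is finite. If $y\in\fC_\mu^+$ has two shortest geodesics from $\mu$ of initial velocities $\xi_1\neq\xi_2$, a direct computation using the one-sided directional derivative $D^+d_y^2(\mu)(v)=-2\max_i\langle\xi_i,v\rangle$ yields a first-order symmetric defect $D^+d_y^2(\mu)(v)+D^+d_y^2(\mu)(-v)=-2(\max_i\langle\xi_i,v\rangle-\min_i\langle\xi_i,v\rangle)$, strictly negative as soon as $\langle v,\xi_1-\xi_2\rangle\neq 0$; divided by $t^2$ this forces $\limsup_{t\downarrow 0}Q_v(t,y)=-\infty$. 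If $y\in\fC_\mu\setminus\fC_\mu^+$ is a first-conjugate point on a unique minimizing geodesic, then $d_y^2$ is differentiable at $\mu$, but the second-order Taylor remainder along directions with nonzero Jacobi component is strictly super-quadratic-negative, from the fold singularity of $\exp_\mu$ at the conjugate vector, so again $\limsup_{t\downarrow 0}Q_v(t,y)=-\infty$ on an open set of $v$.

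In either case the set of ``bad'' $v$ on the unit sphere of $T_\mu M$, along which $\limsup_{t\downarrow 0}Q_v(t,y)=-\infty$, is the complement of a proper affine subspace and hence has positive surface measure. A Fubini argument over $(y,v)\in\fC_\mu\times$ unit sphere then shows that, if $\Prb^X(\fC_\mu)>0$, there exists at least one $v$ with $\Prb^X\{y\in\fC_\mu:\limsup_{t\downarrow 0}Q_v(t,y)=-\infty\}>0$, whence $\int\limsup_{t\downarrow 0}Q_v(t,y)\,d\Prb^X(y)=-\infty$, contradicting the lower bound $0$ derived above. The step I expect to be the main obstacle is rigorously establishing $\limsup_{t\downarrow 0}Q_v(t,y)=-\infty$ at first-conjugate points on unique minimizing geodesics: this requires a careful analysis of $\exp_\mu$ near a fold, or equivalently sharp upper bounds on $\Hess d_y^2$ in the barrier sense, to which the paper's keyword ``Laplacians bounded in the barrier sense'' evidently alludes. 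The $\fC_\mu^+$ part is comparatively routine, being a direct consequence of the first-order behavior of $d_y^2$ at a point with multiple shortest geodesics.
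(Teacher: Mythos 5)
Your proposal follows the same high-level architecture as the paper: the easy part ($\fC_\mu^+$) is handled by the nonvanishing first-order symmetric defect of $D_\mu d_y^2$, while the hard part ($\fC_\mu^-$) hinges on a second-order $-\infty$ phenomenon at conjugate points. Your treatment of $\fC_\mu^+$ is essentially the paper's first-order analysis (Lemma~\ref{lem: sym}, Corollary~\ref{cor: verylast}, Theorem~\ref{thm: le}, Corollary~\ref{cor: +}) and is correct.

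The decisive difference --- and the gap --- is in how you pass from a pointwise second-order defect at $\fC_\mu^-$ to a contradiction with minimality. You work with directional second-order symmetric difference quotients $Q_v(t,y)$ and need, for each $y\in\fC_\mu^-$, that $\limsup_{t\downarrow 0}Q_v(t,y)=-\infty$ on a \emph{positive-measure set of directions} $v$, so that Fubini produces a single direction along which $\int \limsup_{t\downarrow 0}Q_v(t,y)\,d\Prb^X(y)=-\infty$. The result you would invoke --- Generau's theorem --- gives something strictly weaker: $\Delta_\mu d_y=-\infty$ in the barrier sense, which is a statement about the \emph{trace} of the Hessian, not about individual directions. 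From a trace bound, the only direction forced to blow up is the one along the smallest eigenvalue of a barrier's Hessian, and that direction may rotate as the barrier's Laplacian decreases to $-\infty$; the set of directions where $\limsup_t Q_v(t,y)=-\infty$ could a priori be a single line, of measure zero, and then your Fubini step fails. Your ``fold singularity'' heuristic does not close this gap: at points of $\fC_\mu^-$ the singularity of $\exp_\mu$ need not be a fold (it can be a more degenerate Thom--Boardman type, and in higher dimensions worse), which is precisely why the general statement required Generau's analysis rather than an elementary normal-form computation. You correctly flag this as the main obstacle, but it is a genuine missing idea, not merely a step to be made ``careful''.

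The paper avoids this issue entirely by never leaving the scalar, trace-level quantity. The cost is that barrier-sense Laplacian bounds do not integrate by a simple Fatou argument: the barrier functions are non-canonical and defined only on shrinking balls. Lemma~\ref{lem: integraldelta} supplies the needed integration, at the price of a delicate measurable-selection/decomposition argument (reducing to $G_\omega(q)=0$, $D_qG_\omega=0$, choosing quadratic barriers from a countable dense set, and partitioning $\Omega$ accordingly), and the finiteness of the error is controlled by the uniform semiconcavity bound of Lemma~\ref{lem: Cr}. Your reverse-Fatou step is simpler precisely because you have shifted the difficulty into the (unproven) directional claim. If you wish to retain your directional framework, you would need to establish a directional strengthening of Generau's theorem at $\fC_\mu^-$ points; absent that, the paper's route via barrier Laplacians and Lemma~\ref{lem: integraldelta} is the one that actually closes the argument.
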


The proof formalizes the vague idea that the distance function from $x$ to a cut point $\mu$ of $x$ is \emph{very concave} at $\mu$. Thus, if $\Prb\{X \in \fC_\mu\} >0$,  by integration  the Fr\'echet function $F$ would also  be \emph{too concave} at $\mu$.  However,  functions behave \emph{convex-like} at any local minimum, in the sense that $\mathcal C^2$-functions have  vanishing differentials and positive semi-definite Hessians at such points.

Unfortunately, an implementation of  this naive idea cannot avoid technical difficulties and some nonsmooth analysis. However, dealing with these difficulties provides some new insights into the structure of Fr\'echet functions and Fr\'echet means, as we are going to explain now.

Firstly, the squared distance functions $d_x^2$ are not smooth at cut points of $x$.  While a single function $d_x^2$ is  smooth on an open dense subset of $M$, the Fr\'echet function $F$ may not even be $\mathcal C^1$ on any open subset of $M$.  This nonsmoothness occurs on \emph{every} compact Riemannian manifold $M$, for an appropriate choice of the random variable $X$, as we will verify in Theorem \ref{thm: guijarro} below. 

On the other hand,  the Fr\'echet function $F$ is \emph{semiconcave}. Therefore, it is differentiable with vanishing differential at any local minimum, hence at any Fr\'echet mean, see Corollary \ref{cor:zero-derivative-at-mean} below. In terms  introduced by \cite{HHMMN13} 
this implies that there is no \emph{stickiness}  on  complete Riemannian manifolds, thus answering a question from \cite{lammers2023sticky}, see Section \ref{scn:discussion}.

 The part $\fC_\mu ^+$ of the cut locus is much easier to analyze.  
 For any $x\in \fC_{\mu}^+$, the  function $d_x^2$ is not differentiable at $\mu$ in the classical sense.  Using semiconcavity   and integration, $\Prb\{X \in \fC_\mu^+\} >0$ would imply  that $F$ does not have a linear differential at $\mu$.     This contradicts Corollary \ref{cor:zero-derivative-at-mean} mentioned above and  proves $\Prb\{X \in \fC_\mu^+\} =0$.  This 
 argument  shortens and simplifies  the main result  in \cite{LeBarden2014} and fills a 
 gap
in the proof therein.  While the proof of \citet[Theorem 1]{LeBarden2014}  is essentially correct  (for a simplification of one argument see \cite{groisser2023genericity}),  it uses a non-valid assumption on the possible structure of measures on $M$, and formally only applies to random variables concentrated on countable sets.  We note, that \citet[Corollary 3]{LeBarden2014} prove that the differential $D_{\mu} F$ of the Fr\'echet function $F$ vanishes at a Fr\'echet mean $\mu$. However, the proof relies on \citet[Theorem 1]{LeBarden2014} which contains a gap mentioned above.

The more complicated  part of the cut locus
$$\fC_\mu ^-:= \fC_\mu \setminus \fC_\mu ^+$$
cannot be avoided in questions related to   the cut locus.    For instance, for any analytic nonspherical Riemannian metric on
$\mathbb S^2$, the set $\fC_\mu ^-$ is nonempty at least for some $\mu \in \mathbb S^2$, as can be deduced from \cite{myers1935connections,myers1936connections} and \cite{green1963wiedersehensflachen}.

The proof of the more difficult statement  
\begin{equation} \label{eq: main}
\Prb\{X \in \fC_\mu^-\} =0
\end{equation}
uses  the second order behaviour  at $\mu$ of the non-smooth functions $d_x^2$ for a point $x$ contained in $\fC_\mu ^-$. In our  proof, encompassing (\ref{eq: main}), we rely on a theorem verified in \cite{generau2020laplacian}, that the Laplacian of $d_x$  at $\mu$ satisfies $\Delta (d_x) =-\infty$ \emph{in the barrier sense}, for all $x\in \fC_\mu^-$, see Section \ref{scn:barrier}  below.
The same statement follows for $d_x^2$ and by  integration, also for $F$  if \eqref{eq: main}  is wrong.
But $\Delta _{\mu} F =-\infty$ in the barrier sense cannot hold at a minimum point $\mu$ of $F$.

In the final Section \ref{scn:discussion} we discuss several by-products of our considerations: nonstickiness and existence of nowhere smooth Fr\'echet functions, mentioned above, as well as generalizations and variants of Theorem \ref{th:cpzero} to Fr\'echet $p$-means for $p\neq 2$, to non-complete Riemannian manifolds and to  metric  spaces with curvature bounds.

\section{Semiconcave Functions} 
Let $b\in \RR$ and $U\subseteq M$ open. A function $f:M \to \RR$ is  \emph{$b$-concave in $U$}  if $$t\mapsto f(\exp_q(tv)) - \frac 1 2 b t^2 \|v\|^2$$ is concave for all $q \in U$, $v\in T_qM$ and $t \in \RR$, whenever $\exp _q(tv)$ is in $U$. 

Here and throughout the paper, $T_qM$ is the tangent space of $M$ at $q\in M$, $\langle \cdot,\cdot \rangle$ is the Riemannian metric and $\|\cdot\|$ the induced norm.

All subsequently stated properties of $b$-concave functions follow directly from properties of concave functions on intervals and can be found in \citet[Section 2.2]{GOV22} and \citet{bangert1979analytische}.

If  $f$ is $b$-concave then it is also $\hat b$-concave for every $\hat b > b$.  For $b\leq 0$, every $b$-concave function is concave. Every $\mathcal C^2$ function is $b$-concave if $b$ is an upper bound of the eigenvalues of its Hessians.

Every $b$-concave function is locally Lipschitz continuous. 
An infimum  $f=\inf f_{\alpha}:U\to \RR$ of a family  of $b$-concave functions $f_{\alpha}$ is $b$-concave.

Let $b, s, L>0$ and let  $f:U\to \RR$ be $b$-concave, $L$-Lipschitz, nonnegative and bounded from above by $s$.  Then the function $f^2$ is
$\hat b$-concave with 
\begin{equation} \label{eq: quadrat}
    \hat b: =2(s  b + L^2)\,. 
    \end{equation}

The property of being \emph{semiconcave}, i.e. being  $b(x)$-concave locally around any $x\in U$, for some function $b: U \to \RR$, does not depend on the Riemannian metric but just on the smooth atlas, \citet[Section 2]{bangert1979analytische}.

For all $w\in T_qM$,  every $b$-concave function $f$ has a directional one-sided derivative
$$D_qf(w) := \lim_{t\searrow 0} \frac{f(\exp_q (tw)) - f(q)}{t}\,.$$
The (possibly nonlinear, but positively homogeneous) differential  $D_qf: T_qM \to \RR$ is concave.  
The function $f$ is differentiable at $q$ (in classical sense) if and only if the concave function $D_qf$ is a linear map.

By definition, $D_qf(0)=0$ for every $b$-concave function $f$. If, additionally, $f$ has a local minimum at $q$ then $D_qf$ is a nonnegative concave function. Every 
concave function on $\RR^n$ which admits a minimum is constant. This implies:
\begin{Lem}  \label{lem: dif0}
If   $f:U\to \RR$  is $b$-concave and has a minimum at $\mu$ then 
$$D_{\mu}f \equiv 0\,.$$
\end{Lem}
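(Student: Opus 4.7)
The plan is to assemble, in the right order, the three ingredients that have already been prepared in the paragraph preceding the lemma: that $D_\mu f$ is a concave (positively homogeneous) function on the linear space $T_\mu M$, that it vanishes at the origin, and that it is nonnegative whenever $\mu$ is a local minimum of $f$. Once these are in hand, the conclusion is forced by the classical fact that a concave function on a finite-dimensional vector space which attains its infimum must be constant.

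More concretely, the first step is to recall that for every $w\in T_\mu M$ the one-sided directional derivative $D_\mu f(w)$ exists by $b$-concavity along the geodesic $t\mapsto \exp_\mu(tw)$, and that the resulting map $D_\mu f:T_\mu M\to\RR$ is concave and positively homogeneous with $D_\mu f(0)=0$, exactly as stated above. The next step is to observe that since $\mu$ is a minimum of $f$, the difference quotient $(f(\exp_\mu(tw))-f(\mu))/t$ is nonnegative for all small $t>0$, so passing to the limit gives $D_\mu f(w)\ge 0$ for every $w$. Hence $D_\mu f$ is a nonnegative concave function on $T_\mu M$ vanishing at $0$, so $0=D_\mu f(0)$ is a global minimum of $D_\mu f$.

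The final step is to invoke the elementary fact that a concave function $g$ on $\RR^n$ attaining its infimum at some point $p$ must be constant. A brief justification: for any $w$, concavity along the line through $p-w$, $p$, $p+w$ yields
$$g(p)\;\ge\;\tfrac12 g(p+w)+\tfrac12 g(p-w)\;\ge\; g(p),$$
forcing equality throughout and hence $g(p+w)=g(p)$. Applying this to $g=D_\mu f$ at $p=0$ yields $D_\mu f\equiv 0$. I do not foresee any serious obstacle: every ingredient has been laid out in the preceding paragraphs, and the lemma is essentially a bookkeeping statement that packages them together. The only point worth flagging is that the argument has to be carried out on the linear space $T_\mu M$ rather than on $M$ itself, which is precisely the reason one passes to the infinitesimal object $D_\mu f$ in the first place.
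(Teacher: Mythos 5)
Your proof is correct and follows exactly the route the paper itself takes: identify $D_\mu f$ as a concave, positively homogeneous function on $T_\mu M$ with $D_\mu f(0)=0$; note that minimality of $\mu$ makes $D_\mu f$ nonnegative so $0$ is its infimum; and then use the elementary fact that a concave function on $\RR^n$ attaining its infimum is constant. The only difference is that you spell out the midpoint argument for that last fact, which the paper merely asserts.
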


For  any $b$-concave function, the nonlinearity of $D_qf$ can be  quantified.  We define the symmetrized differential $D_q^{sym} f:T_qM\to \RR$ of $f$ at $q$ to be
\begin{equation}
D^{sym}_q f  (w) :=D_qf(w)+ D_qf(-w)\,,\quad w\in T_qM\;.
\end{equation}

\begin{Lem}  \label{lem: sym}
Let $f:U\to \RR$ be $b$-concave.  For $q\in U$
the symmetrized differential $D_q^{sym} f:T_qM\to \RR$ is a nonpositive concave map. The equality 
$D_q^{sym}f \equiv 0$ happens if and only if $D_qf$ is linear.
\end{Lem}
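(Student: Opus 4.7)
The plan is to deduce all three claims directly from the standing properties of $D_q f$: it is concave and positively homogeneous (as noted in the excerpt), and via $b$-concavity of $f$ the one-variable function $t\mapsto f(\exp_q(tw))-\tfrac12 bt^2\|w\|^2$ is concave on a neighbourhood of $0$. These three facts are enough, and no further information about $f$ will be needed.

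First I would handle concavity of $D^{sym}_q f$. Since $w\mapsto D_qf(-w)$ is the precomposition of the concave map $D_qf$ with the linear involution $w\mapsto -w$, it is again concave, and $D^{sym}_q f$ is then concave as a sum of concave maps. For nonpositivity I would fix $w\in T_qM$ and consider $g(t):=f(\exp_q(tw))-\tfrac{1}{2} b t^2 \|w\|^2$, which is concave on an interval about $0$. The quadratic correction contributes $0$ to the one-sided derivatives at $t=0$, so $g'_+(0)=D_qf(w)$ and $g'_-(0)=-D_qf(-w)$. For a concave function of one real variable the right derivative never exceeds the left derivative, hence $D_qf(w)\le -D_qf(-w)$, i.e. $D^{sym}_q f(w)\le 0$.

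For the equivalence, one direction is immediate: if $D_qf$ is linear then $D^{sym}_q f\equiv 0$. For the converse, assume $D^{sym}_q f\equiv 0$, so $\phi:=D_qf$ is odd. Being concave and positively homogeneous with $\phi(0)=0$, midpoint concavity combined with $\phi(w_1+w_2)=2\phi\!\left((w_1+w_2)/2\right)$ yields superadditivity $\phi(w_1+w_2)\ge \phi(w_1)+\phi(w_2)$. Applying the same inequality to $-w_1,-w_2$ and using oddness flips it to subadditivity, so $\phi$ is additive. Additivity plus positive homogeneity (extended to negative scalars via oddness) plus continuity of concave functions on a finite-dimensional vector space gives $\RR$-linearity of $\phi$.

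The expected obstacle is the final step, that is, upgrading ``odd, concave, positively homogeneous'' to linear. Every other piece is a one-line consequence of elementary concave-function calculus, but the linearity argument requires combining the superadditivity built into concave homogeneous maps with the subadditivity forced by oddness, and then invoking automatic continuity to pass from $\QQ$-linearity to $\RR$-linearity.
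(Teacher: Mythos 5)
Your proof is correct, and it takes a somewhat different route from the paper's, mainly in the two nontrivial steps.

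For nonpositivity, the paper uses midpoint concavity directly in the tangent space: since $D_qf$ is concave with $D_qf(0)=0$, one has $D_qf(w)+D_qf(-w)\le 2D_qf(0)=0$. You instead descend to the one-variable concave function $g(t)=f(\exp_q(tw))-\tfrac12 bt^2\|w\|^2$ and compare its one-sided derivatives at $0$, using $g'_-(0)\ge g'_+(0)$. Both arguments are standard consequences of concavity; yours is slightly longer but equally valid and perhaps more self-contained, since it does not presuppose that $D_qf$ is itself concave but rederives the needed inequality from $b$-concavity of $f$.

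For the equivalence, the paper's converse is a one-liner: if the sum $D_qf(w)+D_qf(-w)$ of two concave functions of $w$ is identically zero, each summand is both concave and convex, hence affine, hence linear (as it vanishes at the origin). Your argument is more elaborate: you note $\phi=D_qf$ is odd, use concavity plus positive homogeneity to get superadditivity, apply oddness to flip this to subadditivity, conclude additivity, and then combine with homogeneity and oddness to get linearity. This works, but one remark: the invocation of continuity and the passage ``from $\QQ$-linearity to $\RR$-linearity'' is unnecessary. Once you have additivity and positive homogeneity over $\RR_{>0}$ (which is built in, since $D_qf(tw)=tD_qf(w)$ holds for all real $t>0$) and oddness, you already have homogeneity over all of $\RR$, hence $\RR$-linearity outright; there is no Cauchy-equation subtlety to worry about. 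The paper's ``concave $+$ convex $\Rightarrow$ affine'' shortcut is cleaner and avoids this detour, but your decomposition into super/subadditivity is a genuine alternative and would still go through in settings where one wanted to avoid appealing to the affine characterization.
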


\begin{proof}
 Since $D_q f$ is concave and $w\mapsto -w$ is an isometry, the map $w\mapsto D_qf(-w)$ is concave and so is the sum $D_q^{sym} f(w)=D_qf(w) +D_qf(-w)$.

By concavity of $D_qf$, for any $w\in T_qM$:
$$ D_q^{sym} f (w) = D_qf (w) +D_qf(-w)   \leq 2 D_qf (0)  =0\;.$$

Clearly $D_q^{sym} f$ vanishes if $D_qf$ is linear.  On the other hand, if the sum $D_q^{sym}f$ of two concave functions vanishes, both concave functions have to  be linear. This finishes the proof.
\end{proof}

An integral of  a family of concave functions is concave; more generally:
 
\begin{Lem} \label{lem: new}
Let $(\Omega, \nu)$ be a measurable space with a finite measure $\nu$. 
Let $b:\Omega\to \RR$ be integrable with 
$b_\Omega:= \int _{\Omega} b(\omega)\, d\nu (\omega)$.

For all $\omega \in \Omega$, let $G_{\omega}:U\to \RR$  be a function, such that
\begin{itemize}
 \item for all $q\in U$, the function $\omega \mapsto G_{\omega} (q)$ is integrable.
  \item $G_{\omega} : U\to \RR$ is $b(\omega)$-concave.
\end{itemize}

Then   the function
$$f(q):=\int _{\Omega } G_{\omega }( q)  \, d\nu (\omega)$$
 is $b_\Omega $-concave.
The differential $D_qf:T_qM\to \RR$ satisfies for all $v\in T_qM$:
\begin{equation} \label{eq: int}
D_qf (v) =\int _{\Omega }  (D_q (G_{\omega} ) \, (v))  \, d\nu (\omega)\,. 
\end{equation}
\end{Lem}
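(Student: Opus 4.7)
My plan mirrors the two conclusions, and I would prove them in order. First I would verify $b_\Omega$-concavity of $f$. Fix $q \in U$ and $v\in T_qM$, let $I \subseteq \RR$ be an open interval with $0 \in I$ on which $\exp_q(tv)$ stays in $U$, and for $\omega\in \Omega$ set
$$h_\omega(t) := G_\omega(\exp_q(tv)) - \tfrac12 b(\omega) t^2 \|v\|^2, \qquad t \in I.$$
By assumption each $h_\omega$ is concave on $I$. Writing the concavity inequality for $h_\omega$ at any two points $t_1, t_2 \in I$ with convex parameter $\lambda\in[0,1]$ and integrating over $\omega$---legitimate since each of the three terms in $h_\omega$ is $\nu$-integrable by hypothesis---yields the corresponding concavity inequality for $t \mapsto f(\exp_q(tv)) - \tfrac12 b_\Omega t^2\|v\|^2$. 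That is exactly $b_\Omega$-concavity of $f$.

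For the differential formula (\ref{eq: int}), the work is to exchange limit and integral, and the key input is the one-variable observation that if $h$ is concave on an interval containing $0$ with $h(0)=0$, then $h(t)/t$ is nonincreasing in $t>0$. I would apply this to the $h_\omega$ above by setting
$$\psi_\omega(t) := \frac{G_\omega(\exp_q(tv)) - G_\omega(q)}{t} - \tfrac12 b(\omega)\, t \|v\|^2 \;=\; \frac{h_\omega(t)}{t}, \qquad t>0,$$
so that $\psi_\omega(t)$ is nonincreasing in $t$ and $\psi_\omega(t)\nearrow D_q G_\omega(v)$ as $t\searrow 0$. Splitting
$$\frac{f(\exp_q(tv)) - f(q)}{t} \;=\; \int_\Omega \psi_\omega(t)\, d\nu(\omega) + \tfrac12 b_\Omega\, t \|v\|^2,$$
the second summand vanishes as $t\searrow 0$. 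For the first, I would fix any $t_0 > 0$ with $[0,t_0] \subset I$; then $\psi_\omega(t) - \psi_\omega(t_0) \geq 0$ for $t \in (0,t_0]$ and increases monotonically to $D_q G_\omega(v) - \psi_\omega(t_0)$, so monotone convergence delivers the desired limit. Integrability of $\psi_\omega(t_0)$, needed to add the constant back at the end, follows from the integrability of $G_\omega(q)$, $G_\omega(\exp_q(t_0 v))$ and $b(\omega)$.

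The main obstacle is precisely this exchange of limit and integration, and $b(\omega)$-concavity is what makes it painless: after the quadratic correction, the difference quotients form a monotone family in $t$, so monotone convergence applies without any uniform-in-$\omega$ Lipschitz bound across the potentially very different concave functions $G_\omega$. Measurability of $\omega \mapsto D_qG_\omega(v)$ comes for free, as a pointwise limit of the measurable maps $\omega \mapsto \psi_\omega(t)$.
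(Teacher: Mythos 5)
Your argument is correct and follows the same route as the paper: integrate the concavity inequality along geodesics to obtain $b_\Omega$-concavity, then pass the monotone difference quotients through the integral by monotone convergence, using an integrable anchor at a fixed $t_0$ to make the family nonnegative. (One harmless slip: since $h_\omega(0)=G_\omega(q)\neq 0$ in general, you have $\psi_\omega(t)=\bigl(h_\omega(t)-h_\omega(0)\bigr)/t$ rather than $h_\omega(t)/t$; the quantity you actually work with is the correct one.)
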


\begin{proof}
For any  $v \in T_qM$, the restriction of $f$ to the geodesic $t\to \exp_q (t\cdot v)$ satisfies:
\begin{equation} \label{eq: intlast}
f (\exp_q(tv))   - \frac 1 2 b_{\Omega}\cdot t^2 \cdot \|v\|^2  = \int _{\Omega} (G_{\omega}( \exp_q (tv)) - \frac 1 2 b(\omega) \cdot t^2\cdot \|v\|^2) \, d\nu (\omega) \,.
\end{equation}
By assumption, for any $\omega \in \Omega$ the integrand is concave as a function of $t$. Since the measure is finite, the right-hand side is well defined and  concave in $t$ as well. 
This proves the first assertion.

 By concavity, the following convergence is monotone in $t$:
$$D_qf(v) := \lim_{t\searrow 0} \frac{f(\exp_q (tv))  -\frac 1 2 b_{\Omega}\cdot \|v\|^2\cdot  t^2}{t}- f(q)\,.$$ 
  Applying the theorem about monotone convergence to \eqref{eq: intlast}, we obtain \eqref{eq: int}.    
\end{proof}

\begin{Cor} \label{cor: verylast}
  Under the assumptions of Lemma \ref{lem: new} above, the function $D_qf$ is linear if and only if for $\nu$-almost all $\omega \in \Omega$, the function $D_q G_{\omega}$ is linear. 
\end{Cor}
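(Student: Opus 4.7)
The plan is to reduce the statement to an application of Lemma \ref{lem: sym} combined with the linearity formula \eqref{eq: int} from Lemma \ref{lem: new}, plus a standard density argument to pass from "for each $v$" to "for all $v$ simultaneously".

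First, I would apply \eqref{eq: int} from Lemma \ref{lem: new} to the vectors $v$ and $-v \in T_qM$ and add the two resulting identities, yielding
\[
D_q^{sym} f(v) \;=\; \int_{\Omega} D_q^{sym} G_{\omega}(v) \, d\nu(\omega)
\]
for every $v\in T_qM$. By Lemma \ref{lem: sym}, the linearity of $D_q f$ (respectively of $D_q G_\omega$) is equivalent to the vanishing of $D_q^{sym} f$ (respectively $D_q^{sym} G_\omega$), and moreover each $D_q^{sym} G_\omega$ is a nonpositive function on $T_qM$.

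The "if" direction is then immediate: if for $\nu$-almost all $\omega$ the map $D_q G_\omega$ is linear, then $D_q^{sym} G_\omega(v)=0$ for $\nu$-a.a.\ $\omega$ and all $v$, so the integral above vanishes identically, which gives linearity of $D_q f$ by Lemma \ref{lem: sym}.

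For the converse, I would assume $D_q f$ is linear, hence $D_q^{sym} f \equiv 0$. Fixing $v\in T_q M$, the integral $\int_\Omega D_q^{sym} G_\omega(v)\, d\nu(\omega) = 0$ of a nonpositive integrand forces the existence of a $\nu$-null set $N_v \subseteq \Omega$ such that $D_q^{sym} G_\omega(v)=0$ for all $\omega \notin N_v$. The only genuine (and minor) obstacle is that the null set a priori depends on $v$, while we need a single exceptional set. I would resolve this by choosing a countable dense subset $D \subseteq T_qM$ and setting $N := \bigcup_{v \in D} N_v$, which is again a $\nu$-null set. For each $\omega \notin N$, the concave function $D_q^{sym} G_\omega : T_qM \to \RR$ vanishes on the dense set $D$; since a finite-valued concave function on the finite-dimensional vector space $T_qM$ is automatically continuous, it vanishes identically. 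Another application of Lemma \ref{lem: sym} then shows that $D_q G_\omega$ is linear for every $\omega \notin N$, completing the proof.
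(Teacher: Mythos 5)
Your argument is essentially the paper's own: both directions rest on the identity $D_q^{sym} f = \int_\Omega D_q^{sym} G_\omega\,d\nu$ obtained from \eqref{eq: int}, the nonpositivity of the integrand from Lemma \ref{lem: sym}, and a countable dense subset to upgrade the pointwise null sets $N_v$ to a single null set via continuity of $D_q^{sym}G_\omega$. The proof is correct; the only cosmetic difference is that the paper phrases the "if" direction directly as an integral of linear maps rather than passing through $D_q^{sym}$, and works with full-measure sets $A_v$ instead of their complements.
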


\begin{proof}
If $D_q G_{\omega}$ is linear for $\nu$-almost all $\omega$, then $D_q f$ is linear, since it is then  an integral over a family of linear functions by \eqref{eq: int}. 

 On the other hand,  let  $D_qf$ be linear.  
Then, for any $v\in T_qM$, \eqref{eq: int} implies
$$0=D_q^{sym} f(v) = \int _{\Omega} \left( D_q G_{\omega}(v)+  D_qG_{\omega}(-v)\right)  \, d\nu (\omega)\,. $$
As noted in the proof of Lemma \ref{lem: sym} the integrand on the right-hand side is always nonpositive. Thus 
$$ \nu(A_v) = \nu(\Omega) \mbox{ for } A_v:= \{\omega \in \Omega: D_q^{sym} G_\omega(v) = 0\}\,.$$
Fix a countable dense subset $\mathcal D\subset T_qM$. Then, due to $\sigma$-additivity of $\nu$,  
$$ \nu(A) = \nu(\Omega) \mbox{ for } A:= \bigcap_{v\in \mathcal D}A_v\,.$$
Since $D_q^{sym}G_{\omega}$ is continuous and $\mathcal D$ is dense,  $D_q^{sym} G_{\omega}(v)=0$  for all $v\in T_qM$ and  all $\omega \in A$.
 Hence, $D_q G_{\omega}$ is linear for $\nu$-almost all $\omega$, due to Lemma \ref{lem: sym}.
\end{proof}

\section{Distance functions and cut loci} \label{sec: cut}
The following statement is often taken as the definition of the cut locus, instead of 
 our Definition \ref{def: cut}, see \citet[Chapter II.4]{Sakai1996}:  

We have  $x\in \fC_q$ if and only if $x$ lies on some geodesic $\gamma$ starting at $q$, which is minimizing between $q$ and $x$, but not minimizing between $q$ and any point on 
$\gamma$ beyond $x$.

By \cite{bishop1977decomposition}, the point $x$ is contained in $\fC_q$ if and only if $q\in \fC_x$.

Let $q\in M$ be arbitrary. For any $x$ in the open set $O_q:=M\setminus \fC_q$ there exists a unique $v\in T_qM$ with $\|v\|=d(q,x)$ and $\exp _q(v)=x$. 
Hence, there is an inverse,
$$\exp^{-1}_q : M\setminus \fC_q \to T_qM\,,$$
and this map is smooth.
By definition, $d_q =\| \exp^{-1}_q \|$ on $O_q$. 
Hence,  $d_q$ is smooth on $O_q\setminus \{q\}$
and $d_q^2$ is smooth on $O_q$.

In fact, $d_q^2$ is  $b$-concave around \emph{any} point $x\in M$, also for $x\in \fC_q$.  An optimal bound for $b=b(x)$, in terms of $d(p,x)$ and lower curvature bounds on $M$, is provided by the comparison theorem of Toponogov which is essentially equivalent to \citet[Theorem 8.23]{alexander2024alexandrov}.  We will need the following less precise form of this statement, cf. \citet[Proposition 2.7]{GOV22}.   Here and below we denote by $B_r(q)$ the open ball of radius $r$ around $q$.

\begin{Lem} \label{lem: Cr}
For every $q \in M$ there are  $r=r(q)>0$, $C =C(q)>0$
 such that $d^2_x$ 
is $b (x)$-concave on the ball $U=B_r(q)$, for all $x\in M$, where
$$b(x)= C \cdot (d(x,q)+1)\;.$$
\end{Lem}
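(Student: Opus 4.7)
The plan is to combine Hessian comparison with the squaring formula \eqref{eq: quadrat} and an infimum representation; the last of these serves to handle points where $d_x$ fails to be smooth.

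First I would choose $r=r(q)>0$ small enough that $\overline{B_{3r}(q)}$ is compact and strongly convex, the sectional curvature there is bounded below by some $-\kappa^2$, and the injectivity radius at every point of $\overline{B_{3r}(q)}$ exceeds $4r$. Such $r$ exists by local compactness of $M$, continuity of the curvature, and standard lower semicontinuity of the convexity and injectivity radii.

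The main step will be to show that $d_x$ is $C_0$-concave on $B_r(q)$ with a constant $C_0=C_0(\kappa,r)$ independent of $x$, for every $x$ with $d(x,q)\ge 2r$. For each $y\in B_r(q)$ any minimizing geodesic from $x$ to $y$ must cross $\partial B_{2r}(q)$, so $d_x(y)=d(x,z)+d_z(y)$ for some $z=z(x,y)\in\partial B_{2r}(q)$, and the triangle inequality promotes this identity to the local infimum representation
\[
d_x(y') = \inf_{z\in\partial B_{2r}(q)}\bigl(d(x,z)+d_z(y')\bigr) \quad \text{on } B_r(q).
\]
For each such $z$, the function $d_z$ is smooth on $B_r(q)$ by the injectivity radius condition; moreover, the minimizing geodesic from $z$ to any $y'\in B_r(q)$ stays in $\overline{B_{3r}(q)}$ by strong convexity, and $d(z,y')>r$. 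Hessian comparison under the lower sectional curvature bound $-\kappa^2$ then yields the uniform estimate $\Hess(d_z)\le C_0\,g$ on $B_r(q)$. Each summand $d(x,z)+d_z(\cdot)$ is therefore $C_0$-concave, and so is their pointwise infimum $d_x$, by the closure property recalled in Section~2.

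Applying the squaring formula \eqref{eq: quadrat} with $L=1$ and $s=d(x,q)+r$ then gives that $d_x^2$ is $\hat b$-concave with $\hat b=2((d(x,q)+r)C_0+1)\le C(d(x,q)+1)$ for a suitable $C=C(q)$. The remaining regime $d(x,q)<2r$ is even simpler: in that case $x\in\overline{B_{2r}(q)}$, so $B_r(q)$ lies entirely inside the injectivity radius of $x$, the function $d_x^2$ is smooth on $B_r(q)$, and direct Hessian comparison inside the strongly convex ball $\overline{B_{3r}(q)}$ yields $\Hess(d_x^2)\le C'\,g$ for an absolute constant $C'=C'(\kappa,r)$; since $d(x,q)+1\ge 1$, this is absorbed into the same estimate. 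The principal technical obstacle—the potential nonsmoothness of $d_x^2$ on $\fC_x\cap B_r(q)$ when $x$ is far from $q$—is precisely what the infimum representation is designed to resolve.
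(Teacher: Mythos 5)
Your proof is correct and reaches the same conclusion, but the technical machinery is genuinely different from the paper's. Both arguments share the key structural idea: reduce the badly behaved far-away distance functions $d_x$ to an infimum of well-behaved distance functions to nearby points, so that the uniform semiconcavity of the latter transfers to the former, and then invoke \eqref{eq: quadrat} to square. The implementations differ, however. You obtain the uniform upper Hessian bound on $d_z$ (for $z$ on the fixed sphere $\partial B_{2r}(q)$) via the Hessian comparison theorem under a local lower sectional curvature bound $-\kappa^2$, and you write $d_x$ as an infimum over the \emph{fixed} sphere $\partial B_{2r}(q)$ of the functions $d(x,z)+d_z(\cdot)$. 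The paper instead avoids comparison geometry: it gets uniform Hessian bounds purely from compactness of suitable closed balls and smoothness of the two-variable function $(y,z)\mapsto d(y,z)$ away from the diagonal, and for far $x$ it writes $d_x \equiv d_{K_x} + \mathrm{const}$, where $K_x$ is the $x$-dependent level set $\{z\in B_{7r}(q): d(z,x)=s-5r\}$, then uses that $d_{K_x}$ is an infimum of nearby distance functions. Your fixed-sphere infimum is arguably cleaner conceptually, and curvature comparison makes the constant $C_0$ explicit; the paper's soft compactness argument is more elementary in the sense of avoiding the comparison toolkit, at the cost of a slightly fussier choice of the $x$-dependent set $K_x$. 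Both routes are valid, and both require the same care in choosing $r$ small enough (in your case, to secure strong convexity, a curvature bound, and an injectivity-radius bound on $\overline{B_{3r}(q)}$; in the paper's, to guarantee smoothness of $d$ and $d^2$ on the relevant product domains).
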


\begin{proof}
We find some $0<r<1$ such that all pairs of points $y,z\in W=B_{10 \cdot r} (q)$ are connected by a unique shortest geodesic, \citep[p. 33]{Sakai1996}.  The map $(y,z)\mapsto \exp^{-1}_y(z)$
is smooth in $W\times W$. Hence also the map
$$(y,z)\to d^2(y,z)=\|\exp^{-1}_y(z)\|^2$$
is smooth on $W\times W$.

For all $x\in \overline{ B_{9r} (q)}$, all eigenvalues of the Hessians of the functions $d^2_x:U\to \RR$ are uniformly bounded from above, by  some $C_1>0$.  Hence, 
\begin{equation} \label{eq: sm}
d^2_x:U\to \RR    \; \; \text{is} \; \; C_1-\text{concave} \; \;  \text{if} \; \; d(x,q)\leq 9r.
\end{equation}

In order to deal with $d^2_x:U \to \RR$ for $x\in M \setminus \overline{ B_{9r} (q)}$, we argue as follows.

By choice of $r$ above, the function $(y,z)\to d(y,z)=\sqrt {d^2(y,z)}$ is smooth on $$\left(B_{10r}(q) \setminus \overline{B_{2r}(q)}\right)  \times  B_{2r}(q) \subset M\times M\;.$$

Thus,  there is  a uniform bound $C_2>0$ on all eigenvalues  of Hessians of  all distance functions 
$d_x$ with $x\in \overline{B_{9r}(q)}  \setminus B_{3r}(q) $.  Therefore,  all distance functions 
$d_x$ with $x\in \overline{B_{9r}(q)}  \setminus B_{3r}(q) $
are $C_2$-concave on $U$.

An infimum of $C_2$-concave functions is $C_2$-concave.  Hence, for any subset $K\subset \overline{ B_{9r} (q) }\setminus B_{3r} (q) $,  this implies $C_2$-concavity on $U$ of the distance function $$d_K =\inf_{x\in K} d_x \;.$$

Fix now some  $x\in M$ with $s:=d(x,q) > 9r$ and fix $K=K_x$ as the subset 
$$K:= \{z\in B_{7r} (q)\;\;  ; \;\;   d(z,x)=s-5r\} \,.$$
By the triangle inequality $K$ does not contain points in $B_{3r} (q)$.
We claim
(cf. \citet[Section 2.5]{kapovitch2021remarks})
that on $U$
\begin{equation}  \label{eq: dK}
d_x \equiv d_K + (s-5r) \,.
\end{equation}

Indeed,
for every $y\in U$,
every shortest geodesic from $y$ to $x$ contains a point $z$ with $d(z,x)=s-5r$.   Since $d(y,x)< s+r$, we deduce
$d(y, z)< 6r$, hence $d(q,z)<7r$.  Thus, $z\in K$.  Therefore, 
$$d_x(y)=d_z(y) + (s-5r) 
\geq d_K(y) +(s-5r)\,,$$

On the other hand, for every $y$ in $U$, the triangle inequality implies 
$$d_x(y)\leq d_K(y) +(s-5r)\;.$$
This finishes the proof of  \eqref{eq: dK}.

Since  $d_K$ is $C_2$-concave on $U$, the functions $d_x$ are $C_2$-concave on $U$ for all
 $x\in M$ with $d(x,q)> 9r$.
 Moreover,  $d_x$ is $1$-Lipschitz and bounded on $U$ by $d(x,q)+r$. Thus, \eqref{eq: quadrat} implies that $d^2_x$ is $b(x)$-concave on $U$ with
 $$b(x)=2\left(C_2 \cdot (d(x,q) +r)   +1\right) \leq  4C_2 \cdot d(x,q)+ 2 \,. $$
Combining with \eqref{eq: sm}  we finish the proof setting $C= \max \{2,C_1, 4C_2  \}$.

\end{proof}

 The differentials $D_q d_x^2:T_qM\to \RR$  of the semiconcave functions $d^2_x$, $x\in M$, at the point $q\in  U$ with $U$ from Lemma \ref{lem: Cr} are expressed by the first formula of variation, see  \citet[Corollary 4.5.7, Exercise 4.5.11]{burago2022course},  \citet[Lemma 2.8]{GOV22}  or \citet[Lemma 2]{LeBarden2014}:
 \begin{eqnarray}\label{eq:derivative-distance} 
 ~~D_q d^2_x(v) =-2\cdot \sup \{
\langle v,w\rangle   \;\; ; \;\;w\in T_qM \;, \; \exp_q w = x \;, \; |w|=d(x,q)    \} \,,\end{eqnarray}
This differential $D_q d^2_x$ is a concave function given as a negative supremum of linear maps
of the same norm.  Hence $D_q d^2_x:T_qM\to \RR$ is linear if and only if  exactly one such linear map  is involved, hence if and only if $q$ and $x$ are connected by a  unique shortest geodesic.
This amounts to, cf. \citet[Chapter III, Proposition 4.8]{Sakai1996}:

\begin{Lem} \label{lem: distance}
The differential $D_qd^2_x$ is  a linear map if and only if $x\in M\setminus \fC^+_q$.
\end{Lem}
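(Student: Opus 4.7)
The plan is to argue directly from the first-variation formula \eqref{eq:derivative-distance}, which expresses $D_q d^2_x$ as $-2$ times a supremum of linear functionals indexed by the set
$$S_x := \{w \in T_qM : \exp_q w = x,\ |w| = d(x,q)\},$$
i.e., the collection of initial velocities of minimizing geodesics from $q$ to $x$, rescaled to have norm $d(x,q)$. By Definition \ref{def: cut}, the condition $x \notin \fC_q^+$ is equivalent to $|S_x| = 1$, so the entire statement reduces to showing that the map $v \mapsto \sup_{w \in S_x}\langle v, w\rangle$ is a linear functional on $T_qM$ if and only if $S_x$ is a singleton.

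The easy direction is immediate: if $S_x = \{w_0\}$, then \eqref{eq:derivative-distance} gives $D_q d^2_x(v) = -2\langle v, w_0 \rangle$, which is linear.

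For the converse I would argue by contradiction. Assume that $|S_x| \geq 2$ and that $D_q d^2_x$ is linear. Then $\ell(v) := \sup_{w \in S_x}\langle v, w\rangle$ is also linear, and by Riesz representation there exists $w_0 \in T_qM$ with $\ell(v) = \langle v, w_0\rangle$ for all $v$. For each $w \in S_x$ the defining inequality $\langle v, w\rangle \leq \langle v, w_0\rangle$ then holds for every $v \in T_qM$; consequently the linear functional $v \mapsto \langle v, w_0 - w\rangle$ is nonnegative on all of $T_qM$, hence vanishes identically, forcing $w = w_0$. This contradicts the assumption $|S_x| \geq 2$.

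I do not anticipate any real obstacle: once \eqref{eq:derivative-distance} is invoked, the statement reduces to the elementary observation that a supremum of a family of \emph{distinct} linear functionals on a finite-dimensional vector space cannot itself be linear (the argument above, relying only on the fact that a nonnegative linear functional must be zero, avoids any case analysis on the cardinality of $S_x$ and works equally well for finite or infinite $S_x$). The only small point to record is the equivalence between $x \in \fC_q^+$ and $|S_x| \geq 2$, which follows from Definition \ref{def: cut} together with the fact that two minimizing geodesics from $q$ to $x$ coincide if and only if their initial velocities do.
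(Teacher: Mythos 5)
Your proof is correct and follows the same route as the paper: both reduce Lemma \ref{lem: distance} to the first-variation formula \eqref{eq:derivative-distance} and the elementary fact that a supremum of linear functionals $\langle\cdot,w\rangle$, $w\in S_x$, can only be linear when $S_x$ is a singleton. Where the paper compresses this into a single ``Hence,'' you spell out the contradiction (a nonnegative linear functional vanishes identically), which is a clean way to make the step explicit and, as you note, does not even need the fact that the $w\in S_x$ share the same norm.
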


\section{Fr\'echet functions, first order considerations}
Using  Lemma \ref{lem: Cr} we prove:

\begin{Lem}\label{lem:semiconcave}
 For every $q_0\in M$  and every $r_0>0$ there is some $b_0>0$ such that the Fr\'echet function $F$ is
 $b_{0}$-concave in $B_{r_0}(q_0)$.
\end{Lem}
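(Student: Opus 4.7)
The plan is to upgrade the local, $x$-dependent bound of Lemma \ref{lem: Cr} to a single semiconcavity bound that holds on all of $B_{r_0}(q_0)$ and is integrable in $x$ against $\Prb^X$, and then to apply Lemma \ref{lem: new}.

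First I would use Hopf--Rinow to note that $\overline{B_{r_0}(q_0)}$ is compact, and extract from Lemma \ref{lem: Cr} a finite cover $\{B_{r(p_i)}(p_i)\}_{i=1}^N$ of this closure, together with constants $C_i>0$ such that $d_x^2$ is $C_i(d(x,p_i)+1)$-concave on $B_{r(p_i)}(p_i)$ for every $x\in M$. Using $d(x,p_i)\leq d(x,q_0)+d(q_0,p_i)$ and the finiteness of the cover, I can find a single $C>0$ for which $d_x^2$ is $C(d(x,q_0)+1)$-concave on each $B_{r(p_i)}(p_i)$, uniformly in $x\in M$. To pass from these local pieces to $b$-concavity on the whole of $B_{r_0}(q_0)$: along any geodesic $\gamma$ of constant speed $\|v\|$ that stays in $B_{r_0}(q_0)$, the function $t\mapsto d_x^2(\gamma(t))-\tfrac{1}{2}C(d(x,q_0)+1)\,t^2\|v\|^2$ is concave in a neighborhood of every parameter (shifting the basepoint of the parametrization only alters the subtracted quadratic term by an affine function, since $\|v\|$ is constant, so the local bounds transfer across overlapping cover balls), and local concavity on a real interval is equivalent to concavity. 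Hence $d_x^2$ is $C(d(x,q_0)+1)$-concave on $B_{r_0}(q_0)$ for every $x\in M$.

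Then I would apply Lemma \ref{lem: new} with $(\Omega,\nu)=(M,\Prb^X)$, $G_x=d_x^2$, and $b(x)=C(d(x,q_0)+1)$. The second-moment hypothesis of Assumption \ref{as:global} together with Cauchy--Schwarz yields finiteness of $\int_M d(x,q_0)\,d\Prb^X(x)$, so $b$ is $\Prb^X$-integrable and $b_0:=C\int_M(d(x,q_0)+1)\,d\Prb^X(x)<\infty$. Lemma \ref{lem: new} then immediately delivers that $F$ is $b_0$-concave on $B_{r_0}(q_0)$.

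The main obstacle is the uniformity step: Lemma \ref{lem: Cr} only provides a basepoint-dependent small ball and a semiconcavity constant growing with $d(x,q)$, so one has to combine compactness of $\overline{B_{r_0}(q_0)}$, the triangle inequality to re-anchor the bound at $q_0$, the local-to-global concavity argument along geodesics, and finally the second-moment assumption to keep the integrated constant finite.
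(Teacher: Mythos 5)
Your proof is correct and follows essentially the same strategy as the paper: invoke Lemma \ref{lem: Cr} together with compactness of $\overline{B_{r_0}(q_0)}$ to build a finite cover, uniformize the semiconcavity constants, and feed the result into Lemma \ref{lem: new} using the second-moment hypothesis. The only (harmless) structural difference is the order of operations: the paper first applies Lemma \ref{lem: new} on each small cover ball to get local $b_i$-concavity of $F$ and then patches these via the finite maximum, whereas you first patch the $b(x)$-concavity of each $d_x^2$ to all of $B_{r_0}(q_0)$ (making the local-to-global concavity argument along geodesics explicit, which the paper leaves implicit) and then apply Lemma \ref{lem: new} once.
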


\begin{proof}
For any $q\in M$ we find $r>0$ and $C>0$ as in
Lemma \ref{lem: Cr}. Hence,  for $y\in U=B_r(q)$ the function $F$ equals to
$$F(y)=\int _M d^2_x(y) \, d\Prb^X(x)\;,$$
where $d^2_x:U\to \RR$ is $b(x)=C\cdot (d(q,x) +1)$-concave.    By Assumption 1.1,  $x\mapsto b(x)$ is $\Prb^X$-integrable.  Thus,   Lemma \ref{lem: new}  shows that $F$ is $b$-concave with 
$$b:=\int _M  b(x)  \, d\Prb^X(x)\,.$$

Now, let $q_0 \in M$ and $r_0>0$ be arbitrary.
 Then, cover the compact ball $\overline{ B_{r_0} (q_0)}$ by finitely many open balls $U_i=B_{r_i} (q_i)$, for $i=1,...,l$, such that   $F$ is $b_i$-concave on $U_i$ with  some $b_i>0$.  We  set
  $$b_0=\max \{b_i \; \; ; \; \; i=1,...l \} $$  and see that $F$ is $b_0$-concave on $B_{r_0} (q_0)$.
\end{proof}
 
As a direct consequence of Lemma \ref{lem:semiconcave} and Lemma \ref{lem: dif0} we obtain:

\begin{Cor}\label{cor:zero-derivative-at-mean}
 If $\mu$ is a local minimum of the Fr\'echet function $F$,  then  $F$ is differentiable at $\mu$ and $D_\mu F \equiv 0$.
\end{Cor}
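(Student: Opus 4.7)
The plan is to obtain this corollary as an immediate two-step consequence of the preceding lemma and Lemma~\ref{lem: dif0}. First I would apply Lemma~\ref{lem:semiconcave} with $q_0=\mu$ and some fixed $r_0>0$, producing a constant $b_0>0$ so that $F$ is $b_0$-concave on the open ball $U=B_{r_0}(\mu)$. Since $\mu$ is assumed to be a local minimum of $F$, by shrinking $r_0$ if necessary I can ensure that $\mu$ is in fact a minimum of the restriction $F|_U$.

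Next I would invoke Lemma~\ref{lem: dif0} on the $b_0$-concave function $F|_U$, which has a minimum at $\mu$. This yields directly that $D_\mu F\equiv 0$ as a function on $T_\mu M$.

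Finally, I would conclude differentiability from the general principle recalled in Section~2: a semiconcave function is differentiable at a point $q$ (in the classical sense) exactly when its positively homogeneous differential $D_q f\colon T_q M\to\RR$ is a linear map. The zero map is trivially linear, so $D_\mu F\equiv 0$ forces $F$ to be differentiable at $\mu$ with vanishing differential.

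There is no real obstacle here; the work has already been done in Lemma~\ref{lem:semiconcave} (passage from pointwise semiconcavity of each $d_x^2$, via Lemma~\ref{lem: new}, to semiconcavity of $F$ on a neighborhood) and in Lemma~\ref{lem: dif0}. The only point that deserves a brief word is the legitimacy of reducing from a \emph{local} minimum to a minimum on a ball on which $b_0$-concavity holds, which is harmless because $b_0$ can be chosen for any prescribed radius $r_0$.
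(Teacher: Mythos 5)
Your proposal is correct and matches the paper's own argument, which derives the corollary directly from Lemma~\ref{lem:semiconcave} and Lemma~\ref{lem: dif0} in exactly the way you describe. The paper omits the explicit remark that the zero map is linear (hence $F$ is classically differentiable at $\mu$) and the harmless shrinking of the ball, both of which you correctly supply.
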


Interchanging integration and taking derivatives (Lemma \ref{lem: new}), we deduce:

\begin{Lem}\label{lem:interchange-der-integral}  $F$  has one-sided directional derivatives at  all $q \in M$  expressed as
$$ D_q(F) = \int_M D_q(d_x^2)\,d\Prb^X(x)\,.$$
\end{Lem}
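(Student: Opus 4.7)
The plan is to apply Lemma \ref{lem: new} directly with the measurable space $(\Omega,\nu)=(M,\Prb^X)$ and the family $G_x:=d_x^2$, localizing around the fixed point $q$. The statement of Lemma \ref{lem: new} already includes both the $b_\Omega$-concavity assertion (which recovers the proof of Lemma \ref{lem:semiconcave}) and the integral formula \eqref{eq: int}, so the only task is to verify its three hypotheses in a neighborhood $U$ of $q$.

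First I would fix $q\in M$ and invoke Lemma \ref{lem: Cr} to obtain $r=r(q)>0$ and $C=C(q)>0$ such that for every $x\in M$ the function $d_x^2$ is $b(x)$-concave on $U:=B_r(q)$ with $b(x)=C\cdot(d(x,q)+1)$. This supplies the second hypothesis of Lemma \ref{lem: new}. Next I would check integrability: by Assumption \ref{as:global}, $x\mapsto d(x,q)^2$ is $\Prb^X$-integrable, hence so is $x\mapsto d(x,q)$ (finite $\Prb^X$-measure implies $L^2\subset L^1$), and therefore $b(x)=C\cdot(d(x,q)+1)$ is integrable. Similarly, for every $y\in U$ the triangle inequality gives $d_x(y)^2\leq 2(d(x,q)^2+d(q,y)^2)$, so $x\mapsto G_x(y)=d_x(y)^2$ is $\Prb^X$-integrable for every $y\in U$. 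This verifies the first hypothesis.

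With all three hypotheses in place, Lemma \ref{lem: new} directly yields, for every $v\in T_qM$,
\[
D_qF(v)=\int_M D_q(d_x^2)(v)\, d\Prb^X(x),
\]
which is the claimed formula. Since $q\in M$ was arbitrary, this covers all base points.

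I do not anticipate a genuine obstacle here: the heart of the argument, namely the interchange of monotone limits and integration, is already carried out inside the proof of Lemma \ref{lem: new}. The only mild care needed is the integrability bookkeeping for $b(x)$ and $d_x^2(y)$, both of which reduce to the second moment assumption via the triangle inequality.
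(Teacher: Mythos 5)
Your proposal is correct and follows essentially the same route as the paper: the paper derives this lemma as an immediate consequence of Lemma \ref{lem: new} (with the hypotheses already checked in the proof of Lemma \ref{lem:semiconcave} via Lemma \ref{lem: Cr}). You simply re-verify the integrability hypotheses explicitly, which is fine but adds nothing new.
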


Due to Corollary \ref{cor: verylast},  the differential $D_qF$ is linear if and only if the differentials  $D_q (d_x^2)$ are linear for $\Prb^X$-almost all $x\in M$.  Thus,  Lemma \ref{lem: distance} implies:

\begin{Th}  \label{thm: le}
The directional  differential $D_qF$ of the Fr\'echet function  $F$ at the point $q$ is a linear  map  if and only if  $\Prb\{X \in \fC^+_q\} =0$.
\end{Th}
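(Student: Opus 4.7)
The plan is to combine the three preparatory results already established in this section: the pointwise characterization of linearity of $D_q d_x^2$ (Lemma \ref{lem: distance}), the interchange of differentiation and integration (Lemma \ref{lem:interchange-der-integral}), and the characterization in Corollary \ref{cor: verylast} of when an integral of semiconcave functions produces a linear differential. The theorem is essentially a bookkeeping exercise chaining these three statements together.

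First I would set up the framework of Lemma \ref{lem: new} and Corollary \ref{cor: verylast} locally around $q$. Take $(\Omega, \nu) = (M, \Prb^X)$, and for each $x \in M$ set $G_x := d_x^2$ on a ball $U = B_r(q)$ chosen as in Lemma \ref{lem: Cr}. That lemma provides a constant $C = C(q)$ such that $G_x$ is $b(x)$-concave on $U$ with $b(x) = C(d(x,q)+1)$; by Assumption \ref{as:global} the function $x \mapsto b(x)$ is $\Prb^X$-integrable, so the hypotheses of Lemma \ref{lem: new} are satisfied and $F = \int_M G_x \, d\Prb^X(x)$ is $b_0$-concave on $U$. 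The identity \eqref{eq: int} from that lemma is exactly Lemma \ref{lem:interchange-der-integral}, and Corollary \ref{cor: verylast} then yields: $D_q F$ is linear if and only if $D_q(d_x^2)$ is linear for $\Prb^X$-almost all $x \in M$.

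Finally, I would invoke Lemma \ref{lem: distance}, which identifies the set of $x \in M$ for which $D_q(d_x^2)$ fails to be linear as precisely $\fC_q^+$. Consequently, the condition ``$D_q(d_x^2)$ is linear for $\Prb^X$-almost all $x$'' translates directly into $\Prb\{X \in \fC_q^+\} = 0$, which is the claim. No genuine obstacle is anticipated, since the substantial analytic work, namely the locally uniform semiconcavity bound on $d_x^2$, the measurability and integrability of $x \mapsto b(x)$, and the first variation formula underlying Lemma \ref{lem: distance}, has already been carried out in Sections 2 and 3 above.
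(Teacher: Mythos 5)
Your proof is correct and follows the paper's own route exactly: verify the hypotheses of Lemma \ref{lem: new} via Lemma \ref{lem: Cr} and Assumption \ref{as:global}, apply Corollary \ref{cor: verylast} to reduce linearity of $D_qF$ to $\Prb^X$-a.e.\ linearity of $D_q(d_x^2)$, and finish with Lemma \ref{lem: distance}. The paper states this more tersely but with identical content, so there is nothing to add or repair.
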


As a direct consequence of Theorem \ref{thm: le} and Corollary \ref{cor:zero-derivative-at-mean},   we conclude the first half of our main result, which has been asserted and proven with  a gap  by \citet{LeBarden2014}.

\begin{Cor}  \label{cor: +} 
Let $\mu$ be a local infimum of $F$.  Then
$\Prb\{X \in \fC^+_{\mu}\} =0$.
\end{Cor}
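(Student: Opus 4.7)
The plan is to derive the corollary as a direct consequence of the two immediately preceding results: Corollary \ref{cor:zero-derivative-at-mean} gives a property of the Fr\'echet function at any local minimum, while Theorem \ref{thm: le} translates that property into the desired probabilistic statement.

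First, I would invoke Corollary \ref{cor:zero-derivative-at-mean}. Since $\mu$ is a local infimum, hence a local minimum of $F$, the corollary asserts that $F$ is differentiable at $\mu$ in the classical sense with vanishing differential $D_\mu F \equiv 0$. In particular, $D_\mu F$ is a linear map $T_\mu M \to \RR$, namely the zero map. This step uses only that $F$ is semiconcave near $\mu$ (Lemma \ref{lem:semiconcave}) and that a semiconcave function is differentiable with vanishing differential at any local minimum (Lemma \ref{lem: dif0} combined with Lemma \ref{lem: sym}).

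Next, I would apply Theorem \ref{thm: le} at the point $q = \mu$. That theorem characterizes linearity of $D_q F$ by the condition $\Prb\{X \in \fC^+_q\} = 0$. Combined with the linearity of $D_\mu F$ established in the previous step, this yields $\Prb\{X \in \fC^+_\mu\} = 0$, which is the assertion to be proven.

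There is essentially no new obstacle at this stage, since all the substantive technical work has already been assembled earlier in the section: the semiconcavity of $F$ inherited from Lemma \ref{lem: Cr}, the first variation formula \eqref{eq:derivative-distance} together with Lemma \ref{lem: distance} characterizing linearity of $D_q d^2_x$ pointwise in $x$, and the measure-theoretic interchange of Corollary \ref{cor: verylast} that lifts this pointwise characterization through the integral defining $F$. The corollary handles only the easier half $\fC^+_\mu$ of the cut locus; as emphasized in the introduction, the more delicate portion $\fC^-_\mu$ will require the barrier-sense Laplacian argument sketched there, and is not addressed by the combination of results used here.
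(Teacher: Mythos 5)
Your proof is correct and follows exactly the paper's route: combine Corollary \ref{cor:zero-derivative-at-mean} (vanishing, hence linear, differential $D_\mu F$ at a local minimum) with Theorem \ref{thm: le} (linearity of $D_q F$ is equivalent to $\Prb\{X\in\fC_q^+\}=0$) at $q=\mu$. Nothing further to add.
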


\section{Laplacians Bounded in the Barrier Sense}\label{scn:barrier}
Here and below we denote by $\Delta $ the Laplacian on the Riemannian manifold $M$.
When dealing with Laplacian bounds of non-smooth functions, the most common and powerful generalization is the  Laplacian in the distributional sense, cf. \citet{mantegazza2014distributional}. However,   the distributional Laplacian of a distance function
$d_x$ disregards  the more complicated part $\fC_q^-$ of the cut locus \citet[Corollary 2.11]{mantegazza2014distributional}.  For this reason we prefer to work with  Lapacian bounds in the barrier sense, introduced by \citet{calabi1958extension}.

 \begin{Def}\label{def:barrier}
Let $ C\in \RR $,  an open subset $U\subseteq M$, $q\in U$   and  a function $f:U \to \RR$ be arbitrary.
We write
 $\Delta_q f < C$  \emph{in the barrier sense} (i.t.b.s.)   
if 
 there is $r>0$ with   $B_r(q)\subseteq U$,  and a $\mathcal C^2$-function $h_C:B_r(q) \to \RR$ such that 
 \begin{enumerate}
 \item $h_C(q)=f(q)$.
 \item $h_C \geq f$ on $B_r(q)$.
 \item  $\Delta h_C (q)< C$.
\end{enumerate}
 We write $\Delta_q f = - \infty$ i.t.b.s. if $f$ satisfies $\Delta_q f < C$   i.t.b.s. for all $C\in \RR$.
\end{Def}

Here is a simple example illustrating the use of Laplacians i.t.b.s.
\begin{Ex}\label{ex:circle}
Consider the round circle $\mathbb S^1 = \{e^{it}:t\in \RR\}$, $q\in \mathbb S^1$ and $\fC_{q} = \{-q\}$. Then, setting
$$ f(t):=d_q(-qe^{it})=\pi -|t|\mbox{ for } - \pi \leq t\leq \pi\,,$$
and $h_C(t) := \pi + Ct^2$, we have that $\Delta h_C = 2C < C$, $h_C(0) = f(0)$ and $h_C(t) \geq f(t)$ whenever $|t| \leq -C^{-1}$ for arbitrary $C\leq -\pi^{-1}$. In consequence
$$ \Delta_0 f =-\infty\mbox{ i.t.b.s.}\,. $$
\end{Ex} 

\begin{Proper}
    We collect a few properties of these Laplacian bounds i.t.b.s.. In the following, $U$ is an open subset of $M$:
\end{Proper}
\begin{enumerate}
\item[5.3.1:] If $\lambda _1,\lambda _2 > 0$, $C_1,C_2 \in \RR$ and $f_1,f_2:U\to \RR$ satisfy $\Delta _q(f_i)<C_i$
i.t.b.s., $i=1,2$,
then $$\Delta _q (\lambda _1 \cdot f_1 +\lambda _2 \cdot f_2) <\lambda_1 C_1+ \lambda _2 C_2\mbox{ i.t.b.s.}\,.$$

\item[5.3.2:] If $f,g:U\to \RR$ satisfy $f\leq g$ in a neighborhood of $q$ with equality at $q$ then $\Delta_qg < C $ i.t.b.s. for some $C\in \RR$ implies $\Delta_qf< C$ i.t.b.s.

\item[5.3.3:]  If  $f$ is of class $\mathcal C^2$
 then  the inequality $\Delta  f  (q) < C$  (in the usual sense of evaluation at $q$ of the  function $\Delta f$) is equivalent to  
$\Delta _q f <C$ i.t.b.s.. 

This is a direct consequence of the fact that a $\mathcal C^2$ function $h_C-f$ which has a minimum at $q$ satisfies $\Delta (h_C-f) (q)\geq 0$. 


\item[5.3.4:] If $f:U\to \RR$  satisfies $\Delta _q f<0$
i.t.b.s.  then $q$ is not a local minimum of $f$.  

Indeed, otherwise, for any $\mathcal C^2$-function $h_0$ as in Definition \ref{def:barrier}, the point $q$  would be a local minimum of $h_0$. This would contradict $\Delta h_0  (q) <0$.

\item[5.3.5:] A function $f:U\to \RR$ satisfies $\Delta _q f <C$ i.t.b.s. if and only if the function $\tilde f:= f\circ \exp _q $ defined on an open ball around $0$ in   the Euclidean space $T_qM$ satisfies $\Delta _0 \tilde f <C$  i.t.b.s. 

 Indeed, for smooth functions $h$ defined around $q$ in $U$ and $\tilde h := h \circ \exp_p$ around $0$, we have
 \begin{equation} \label{eq: equal}
 \Delta h (q) =\Delta \tilde h (0)\;.
 \end{equation}

Thus, $h$ can play the role of the barrier $h_C$  for $f$ in Definition \ref{def:barrier} if and only if $\tilde h$ can play the role  of the corresponding barrier for the function $\tilde f$.

Note that on the right hand side of \eqref{eq: equal} we have the Euclidean Laplacian (thus the trace of the Euclidean Hessian), on the left hand side $\Delta$ denotes the Laplacian of the Riemannian manifold $M$, hence the trace of the Riemannian Hessian.  In normal coordinates the Riemannian Hessian and the Euclidean Hessian coincide at the origin.

\item[5.3.6:] If $f$ is  $b$-concave in $U$ and $C> b\cdot \dim(M) $ then
$$\Delta _qf < C\,.$$

Indeed,  since $D_qf:T_qM\to \RR$ is concave we find a linear function $\ell:T_qM\to \RR$ with $\ell\geq D_qf$. We choose some $\lambda >b$ with
$\lambda \cdot \dim (M) <C$.
For $r>0$ sufficiently small, define 
$$h : B_r(q) \to \RR,\quad p \mapsto f(q) + \ell(\exp^{-1}_q p) + \frac \lambda 2 \|\exp^{-1}_q p\|^2\,.$$
Then the restriction of  $f-h$ to every geodesic through $q$ is concave.  Since $f-h$  vanishes at $q$ and $D_q (f-h)\leq 0$, we infer that $f-h \leq 0$ on $B_r(q)$.  

Using that $\Delta h (q)= \lambda \cdot \dim (M)$, we deduce that $\Delta _q f <C$ i.t.b.s.


\item[5.3.7:] If $f(q)>0$ for a function $f:U\to \RR$ that is  locally Lipschitz and satisfies $\Delta _q f=-\infty $ i.t.b.s., then $\Delta _q (f^2) =-\infty$ i.t.b.s..

Indeed, let $L$ denote the Lipschitz constant of $f$ on a small ball around $q$, on which we can assume $f>0$.  Fix $C<0$, consider  a $\mathcal C^2$ function $h_C$ defined on a possibly smaller ball $B_r(q)$,
such that $h_C(q)=f(q)$, $h_C\geq f$ on all of $B_r(q)$ and such that $\Delta h_C (q)<C$.

Since $f(\exp _q  (tv)) \leq h_C(\exp _q (tv)) $ for any unit vector $v\in T_qM$ and all $t<r$, $L$-Lipschitz continuity of $f$ implies $D_qh_C (v) \geq -L$. Since this is true for all unit vectors $v \in T_qM$, we infer $\|\grad h_C  (q)\| \leq L$. Noting $h_C^2(q)=f^2(q)$, $h_C^2 \geq f^2$  on all of $B_r(q)$, and
 $$\Delta h_C^2  (q) = 2h_C  (q) \cdot \Delta h_C (q)  +2 \|\grad h_C (q)\|^2  < 2 f(q) \cdot C +2L^2  \;,$$
conclude
$$\Delta _q (f^2) <2 f(q) \cdot C +2L^2\mbox{ i.t.b.s.}\,.$$
Since this is true for all $C$, we deduce the claim.

\item[5.3.8:] If $f:U\to \RR$ is $b$-concave  and $D_qf$ is not linear then $\Delta _q f =-\infty$ i.t.b.s.

This is essentially contained in \citet[p.3931]{generau2020laplacian} but not stated there explicitely.   We provide an argument for convenience of the reader. 

Using the Property 5.3.5, we may replace $U$ by an open subset $O$ of $\RR ^m$ and $q$ by $0$. (Note that the constant $b$ may change, but this does not matter).  Since the function $D_0f$ is concave and non-linear, we find two \emph{different} linear functions $\ell_1,\ell_2:\mathbb R^m \to \RR$  with $\ell_i \geq D_0f$, $i=1,2$. 
Moreover, the function 
$$v\mapsto f_b (v):=f(v)-f(0)-  \frac 1 2 b\cdot||v||^2$$
is concave on $O$ and $D_0f =D_0f_b$. Therefore,  $\ell_1,\ell_2 \geq f_b$ on $O$ 
and $\ell_1(0)=\ell_2(0)=f_b(0)=0$.
Hence
$$f_b \leq \min \{\ell_1,\ell_2\}  = \frac{(\ell_1+\ell_2 ) -\|\ell_1-
\ell_2\|}{2}\;.$$
For every $C>0$,  there exists some $r_C>0$, such that  in $B_{r_C}(0)$ we have $$C\cdot (\ell_1-\ell_2)^2 \leq \|\ell_1-\ell_2\|\,.$$
Therefore, on this ball $B_{r_C}(0)$ we deduce
$$f(v)\leq 
f(0) + \frac{1}{2}(\ell_1+\ell_2) (v)   - \frac{1}{2}C\cdot (\ell_1-\ell_2)^2 (v) +\frac 1 2 b\cdot ||v||^2  =:h_C(v) \;. $$
Since $ \Delta h_C (0)$ converges to $ -\infty $ for $C\to \infty$,    this finishes the proof.
\end{enumerate}

Without some additional assumptions, Laplacian bounds cannot be integrated, due to the fact that the barrier functions $h_C$ appearing in Definition \ref{def:barrier} are defined in a non-canonical way and  only on small balls. This might explain why the proof of the following "obvious" statement is technical.

\begin{Lem} \label{lem: integraldelta}
Let $(\Omega, \nu)$ be a measurable space with a finite measure $\nu$. 
Let $b,C:\Omega\to \RR$ be integrable. Let $U\subset M$ be open and $q\in U$ be a point.  
For all $\omega \in \Omega$, let $G_{\omega}:U\to \RR$  be such that
\begin{enumerate}
\item[5.4.1] For all $y\in U$, the function $ \omega \mapsto G_{\omega} (y)$ is integrable.
\item[5.4.2] $G_{\omega}$ satisfies $\Delta _q G_{\omega} < C(\omega)$ i.t.b.s.  
  \item[5.4.3]
$G_{\omega} : U\to \RR$ is $b(\omega)$-concave.
\end{enumerate}
Then
$$f(y):=\int _\Omega G_ {\omega}(y) \, d\nu 
\mbox{ satisfies i.t.b.s. } 
 \Delta _q f <  \int _\Omega C(\omega) \, d\nu (\omega)\;.$$
\end{Lem}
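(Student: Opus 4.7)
The plan is, via Property 5.3.5, to reduce to Euclidean normal coordinates so that $U$ is a neighbourhood of $0\in\RR^m$, and to exhibit a single $\mathcal{C}^2$ barrier $H$ for $f$ at $0$ on a fixed ball around $0$. The construction splits $\Omega$ into a ``good'' part, where the per-$\omega$ barriers supplied by hypothesis 5.4.2 are simultaneously defined on a common ball with uniform regularity, and a ``tail'', where we control $G_\omega$ by the global linear--quadratic upper bound coming from the $b(\omega)$-concavity in hypothesis 5.4.3.

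For each $\omega$ take a $\mathcal{C}^2$ barrier $h_\omega$ on $B_{r_\omega}(0)$ with strict slack $\delta(\omega) := C(\omega) - \Delta h_\omega(0) > 0$, and set $\ell_\omega := \nabla h_\omega(0)$. The touching conditions $h_\omega(0)=G_\omega(0)$ and $h_\omega\ge G_\omega$ on $B_{r_\omega}(0)$ force $\ell_\omega(w)\ge D_0G_\omega(w)$ for every $w$, so from the $b(\omega)$-concavity of $G_\omega$ we extract the global upper bound
$$G_\omega(v)\;\le\;G_\omega(0)+\langle\ell_\omega,v\rangle+\tfrac{b(\omega)}{2}|v|^2 \;=:\;Q_\omega(v)\qquad\text{on } U.$$
After a measurable selection of $(h_\omega,\ell_\omega,r_\omega)$ -- where on the tail we may replace $\ell_\omega$ by any measurable linear majorant of $D_0G_\omega$ whose norm is dominated by an integrable function (available from 5.4.1 combined with integrability of $b$) -- we define, for parameters $r_0>0$ and $M>0$,
$$\Omega_{r_0,M} := \{\omega\in\Omega : r_\omega\ge r_0,\ \|h_\omega\|_{\mathcal{C}^2(\overline{B_{r_0}(0)})}\le M\}$$
and the candidate barrier
$$H(v) := \int_{\Omega_{r_0,M}} h_\omega(v)\,d\nu(\omega) + \int_{\Omega\setminus\Omega_{r_0,M}} Q_\omega(v)\,d\nu(\omega).$$

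The uniform $\mathcal{C}^2$-bound on the first integrand and the quadratic structure of the second with integrable coefficients make $H$ a $\mathcal{C}^2$ function on $B_{r_0}(0)$; by construction $H(0)=f(0)$, and $H\ge f$ on $B_{r_0}(0)$ summand-wise (for $\omega\in\Omega_{r_0,M}$ we have $B_{r_0}(0)\subset B_{r_\omega}(0)$ and $h_\omega\ge G_\omega$ there, while $Q_\omega\ge G_\omega$ on the whole of $U$). A direct computation of the Laplacian at $0$ yields
$$\int_\Omega C(\omega)\,d\nu(\omega)-\Delta H(0) \;\ge\; \int_{\Omega_{r_0,M}}\delta(\omega)\,d\nu(\omega) - \int_{\Omega\setminus\Omega_{r_0,M}}\bigl(m\,b(\omega)-C(\omega)\bigr)\,d\nu(\omega).$$
As $r_0\to 0^+$ and $M\to\infty$, $\nu(\Omega\setminus\Omega_{r_0,M})\to 0$ by countable additivity, so the tail term vanishes by dominated convergence (integrability of $mb-C$), while the positive term tends to $\int_\Omega\delta\,d\nu>0$ because $\delta>0$ everywhere on a finite positive measure. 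Choosing $r_0$ small and $M$ large therefore yields $\Delta H(0)<\int_\Omega C\,d\nu$ strictly, establishing $\Delta_q f<\int_\Omega C\,d\nu$ i.t.b.s.

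The main obstacle is the combined measurable-selection and uniform-regularity step producing the sets $\Omega_{r_0,M}$: the barriers from 5.4.2 are non-canonical and defined only on small, $\omega$-dependent balls, so extracting a measurable family with an integrable gradient bound and exhausting ``good'' subsets of $\Omega$ of arbitrarily large $\nu$-mass is the technical heart of the argument. Once this selection is in place, the remaining verifications reduce to standard dominated convergence together with the strict pointwise slack $\delta(\omega)>0$, which is what enforces strictness of the integrated Laplacian bound.
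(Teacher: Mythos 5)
Your high-level plan is close to the paper's: split $\Omega$ into a ``good'' part carrying a usable barrier and a ``tail'' controlled by the $b(\omega)$-concavity (Property 5.3.6), pass to normal coordinates via Property 5.3.5, and use integrability of $b$ and $C$ to kill the tail contribution. Your Laplacian bookkeeping
$$\int_\Omega C\,d\nu-\Delta H(0)\;\ge\;\int_{\Omega_{r_0,M}}\delta\,d\nu-\int_{\Omega\setminus\Omega_{r_0,M}}\bigl(m\,b(\omega)-C(\omega)\bigr)\,d\nu$$
is correct, and the quadratic majorant $Q_\omega$ you derive from $b(\omega)$-concavity and the superdifferential $\ell_\omega$ is a valid global bound.

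However, there is a genuine gap, and it sits exactly where you flag it and then move on: \emph{``after a measurable selection of $(h_\omega,\ell_\omega,r_\omega)$.''} Nothing in Definition \ref{def:barrier} provides a canonical barrier, and the set of admissible barriers for a given $\omega$ lives in an infinite-dimensional function space; there is no obvious reason the set-valued map $\omega\mapsto\{(h,r)\}$ is measurable in any sense to which a selection theorem applies. Without an actual selection, the sets $\Omega_{r_0,M}$ are not defined, the map $\omega\mapsto\|h_\omega\|_{\mathcal C^2(\overline{B_{r_0}})}$ is not known to be measurable, the strict slack $\delta(\omega)$ is not a function of $\omega$, and the exhaustion $\nu(\Omega\setminus\Omega_{r_0,M})\to 0$ cannot be asserted. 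Declaring this ``the technical heart'' without doing it leaves the proof incomplete precisely at the point the paper spends most of its effort.

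The paper circumvents measurable selection entirely. After the first-order reduction to $G_\omega(q)=0$, $D_qG_\omega\equiv 0$ (their Step~1, using Corollary \ref{cor: verylast} and Property 5.3.8 to discard the non-linear-differential case), it fixes once and for all a \emph{countable} dense set $\mathcal H\subset\mathrm{Sym}^2(T_qM)$ of quadratic forms, shows each $\omega$ admits some $Q^\omega\in\mathcal H$ whose associated $\hat Q^\omega$ is a barrier with $\Delta\hat Q^\omega(q)<C(\omega)$, and then builds the pieces $A_i$ by explicit, manifestly measurable conditions tested against a fixed sequence $Q_i$ enumerating $\mathcal H$ (conditions 5.4.4--5.4.6). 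Measurability of $A_i$ is then automatic because 5.4.5 reduces to countably many pointwise inequalities $G_\omega(x)\le\hat Q_i(x)$. Each $A_i$ carries the \emph{single} barrier $\nu(A_i)\hat Q_i$, so no $\omega$-indexed family of barriers is ever selected. If you replace your ``measurable selection'' step by this countable-quadratic-barrier device, the remainder of your argument (the $\Omega_{r_0,M}$ vs.\ tail estimate, which is structurally the paper's $\mathcal A_k$ vs.\ $\mathcal B_k$ estimate) goes through.

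One smaller point worth making explicit: to make $Q_\omega$ integrable in $\omega$ you need $|\ell_\omega|$ dominated by an integrable function; this does follow from $b$-concavity plus 5.4.1 and linearity of $D_0G_\omega$, but the case where $D_0G_\omega$ is \emph{not} linear on a set of positive measure has to be dispatched first (as the paper does via Property 5.3.8), and you currently leave it implicit.
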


\begin{proof}
For any measurable set $A\subseteq \Omega$ define the function
$$f^{A}: U\to \RR,\quad 
y\mapsto f^{A} (y):=\int _{A} G_{\omega} (y)\ d\nu (\omega)\,,$$

Below, we are going to decompose $\Omega$ as a countable disjoint union 
$$\Omega =\cup _{i=1}^{\infty} A_i\,,$$
such that for any single $i=1,...$ with $\nu (A_i)>0$  we have in the barrier sense
\begin{equation}  \label{eq: decompose}
\Delta _q (f^{A_i}) < \int _{A_i} C(\omega) \, d\nu (\omega)\;.
\end{equation}

Before detailing the construction,  we verify  why the existence of such a decomposition implies the statement of the Lemma.  
Indeed, we  consider an arbitrary $i_0$
 with $\nu (A_{i_0})>0$ and use  \eqref{eq: decompose} to find a small  $\varepsilon>0$ such that 
 \begin{equation} \label{eq: 3e}
 \Delta _q (f^{A_{i_0}}) +3\varepsilon< \int _{A_i} C(\omega) \, d\nu (\omega)\;.
 \end{equation}
 
 Set $\mathcal A_k :=\cup _{i=1} ^k A_i$ and 
 $\mathcal B _k=\Omega \setminus \mathcal A_k$.
Using Property 5.3.1 inductively, and applying \eqref{eq: decompose}, we deduce  for any $k\geq i_0$:
$$\Delta _q (f^{\mathcal A_k}) +3\varepsilon =
\sum _{i=1}^k \Delta _q (f^{A_i})  +3\varepsilon < \int _{\mathcal A_k} C(\omega) \, d\nu (\omega)\,.$$

On the other hand,  the function $f^{\mathcal B_k}$ is $b_{\mathcal B_k}$-concave on $U$ with
$$b_{\mathcal B_k} = \int _{\mathcal B_k} b(\omega) \, d\nu(\omega)\;,$$
due to Lemma \ref{lem: new}.  Using Property 5.3.6, we conclude  
$$\Delta _q (f^{\mathcal B_k}) < \dim (M) \cdot b_{\mathcal B_k}  +\varepsilon\,.$$

Since the functions $C,b:\Omega \to \mathbb R$
are integrable and since the sets $\mathcal B_k$ are decreasing with empty intersection,  we have
$$\lim _{k\to \infty} \int _{\mathcal B_k}  (|b(\omega)| +|C(\omega)| )\, d\nu (\omega) =0\,.$$
Thus, there is some  $k>i_0$ such that 
$$\dim (M) \cdot \left|\int _{\mathcal B_k} b(\omega) \, d\nu (\omega) \right|  + \left|\int _{\mathcal B_k} C(\omega) \, d\nu (\omega) \right| <\varepsilon\;.$$

For such $k$ we conclude i.b.t.s
$$\Delta _q (f^{\mathcal B_k}) < 2\varepsilon 
\; \; \text{and} \; \;
\int _\Omega C(\omega ) \,d\nu (\omega) > \int _{\mathcal A_k} C(\omega ) \, d\nu (\omega) -\varepsilon\,. $$
Therefore, in conjunction with \eqref{eq: 3e}, i.b.t.s.
$$\Delta _q f =\Delta _q f^{\mathcal A_k} +\Delta _q f^{\mathcal B_k}  < \int _{\mathcal A_k} C(\omega)\, d\nu (\omega)  -3\varepsilon  + 2\varepsilon < \int _{\Omega} C(\omega) \, d\nu (\omega)\,.$$

Thus, it suffices to find a measurable decomposition $\Omega =\cup_{i=1} ^{\infty} A_i$ such that \eqref{eq: decompose} holds true. 

We proceed in three steps.  First we are going to reduce the problem to the case where the functions $G_{\omega}$  vanish together with their first derivatives at $q$.  Secondly, we verify that in our situation the barrier functions $h^C$ provided by Definition \ref{def:barrier}  can be chosen from a countable set of functions. 
Subsequently, we will use these barrier functions in order to define the sets $A_i$.

Step 1:
We may assume that $U$ is a small ball around $q$ such that $\exp _q^{-1} :U\to \tilde U \subset T_qM$ is a diffeomorphism.

 If $\Delta _q G_{\omega}$ is not linear for a subset of  positive $\nu$-measure, then $\Delta_q f$ is not linear due to  Corollary \ref{cor: verylast}.  Hence,  $\Delta_q f = -\infty$ i.t.b.s. by Property 5.3.8.   In this case,  the assertion of the Lemma holds trivially.

Thus,  replacing $\Omega$ by the complement of a zero set with respect to $\nu$, we may  assume that  
 $D_qG_\omega$ is linear for all $\omega \in \Omega$.

For all $\omega \in \Omega$  the smooth function $R_{\omega}:U\to \RR$ defined as 
%
$$R_{\omega} (y) :=D_q (G_{\omega}) (\exp _q^{-1} (y))  + G_{\omega}(q)\,.$$
is the unique function such that $R_{\omega} \circ \exp _q$ is affine and such that $R_{\omega} (q) = G_{\omega} (q)$,  $D_qR_{\omega} = D_q G_{\omega}$.
Since $\exp _q\circ R_{\omega}$ is affine on 
$\tilde U$ we have $\Delta R_{\omega } (q)=0$.

For  any measurable $A\subset  \Omega $, the  function $R^{A}: U\to \RR $ given as  
$$R^A (y) :=\int _A R_{\omega} (y)  \, d\nu (\omega)\,,$$  
is well-defined  due to Lemma \ref{lem: new}. 
Furthermore, the composition $ R^A\circ \exp _q$  is affine  on $\tilde U$ as well, hence  $\Delta (R^{A})  (q)=0$.

Thus, replacing $G_{\omega}$ by $G_{\omega} -R_{\omega}$, and calling the latter again $G_\omega$,  does not affect the Laplacian bounds $\Delta _q G_{\omega}$  and the Laplacian bounds of the functions $f^A$ defined above.
Hence, by this replacement, we may assume that the functions $G_{\omega}$ satisfy Conditions  5.4.1 and 5.4.2, as well as 
\begin{equation} \label{eq: newcond}
  G_{\omega} (q)=0 \; \;  \text{and} \;\; D_qG_{\omega} =0\,.  
\end{equation}

Under these Conditions 5.4.1, 5.4.2 and \eqref{eq: newcond},  we need to find a decomposition $\Omega =\cup A_i$ satisfying  \eqref{eq: decompose}. Note that by this replacement of the functions $G_{\omega}$
we might lose the Condition 5.4.3, so we will not rely on it in the rest of the proof.  We will only use that the functions $G_{\omega}$ are continuous.

Step 2: 
We fix a countable dense set $\mathcal H$ in the vector space  $Sym^2 (T_qM)$ of symmetric bilinear  forms
$T_qM \times  T_qM \to \RR$.  To every $Q\in \mathcal H$  we assign the function $\hat Q:U\to \RR$ given as $\hat Q (y): = \frac 1 2  Q \big(\exp _q^{-1} (y),  \exp _q ^{-1} (y)\big) $.
(Thus $\hat Q\circ \exp _q$ is the quadratic form associated with the bilinear form $Q$).
This function $\hat Q$ vanishes at $q$ together with its first derivative.  The Hessian of $\hat Q$ satisfies 
$$Hess _q (\hat Q)= Q \,.$$


Consider  arbitrary $\omega \in \Omega$. By Definition \ref{def:barrier},  there is a smooth function 
$h_{C(\omega)}$ defined in a small neighborhood $O_{\omega}$ of $q$, such that 
$h_{C(\omega)} -G_{\omega}$ assumes its minimum $0$ at $q$ and such that
\begin{equation} \label{eq: strict}
    \Delta ( h_{C(\omega)}) (q) < C (\omega)\;.
    \end{equation}
Since  $D_q G_{\omega} =0$ and $q$ is a minimum point of $h_{C(\omega)} -G_{\omega}$, we deduce $D_q  h_{C(\omega)}  =0$.

The Hessian $Hess_q (h_{C(\omega)})$ is a symmetric bilinear form on $T_qM$. The cone  $Pos (T_qM) \subset Sym^2 (T_qM)$ of positive definite bilinear forms  is open in the vector space $Sym^2 (T_qM)$, hence so is any translate of 
this cone. By denseness of our countable set $\mathcal H$ we find elements $Q\in \mathcal H$
arbitrarily close to $Hess_q (h_{C(\omega)})$, such that $Q-Hess_q(h_{C(\omega)})$  is positive definite.

Since the inequality \eqref{eq: strict} is strict,  any $Q\in \mathcal H$ sufficiently close to $Hess _q (h_{C(\omega)})$   satisfies 
$\Delta \hat Q  (q) < C(\omega)$.
Therefore, due to denseness of $\mathcal H$, we find 
$Q^{\omega}\in \mathcal  H$  with positive definite $Q^{\omega}-Hess_q(h_{C(\omega)})$  and  such that
 $\Delta \hat Q^{\omega} <C(\omega) $.

Since $(\hat Q ^{\omega}  -h_{C(\omega)} ) (q)=0$
and $D_q\hat Q^{\omega} =D_q h_{C(\omega)} =0$, positive definiteness of $Q^{\omega}-Hess_q h_{C(\omega)}$  implies that $\hat Q^{\omega}- h_{C(\omega)}$ has a local minimum at $q$.  Therefore, in a small neighborhood  $O^{\omega}$  of $q$ we have

\begin{equation}  \label{eq: omega}
\hat Q ^{\omega}  \geq G_{\omega}  \; \;\text{on}  \; \;   O^{\omega} \; \; \text {and} \; \;
\Delta \hat Q^{\omega} (q) <C(\omega).
  \end{equation}

However, the  map $\omega \to Q^{\omega}$ may not be measurable.
Moreover, the inequality  $\hat Q^{\omega} \geq G_{\omega}$ is valid only on a small ball around $0$, whose radius depends on $\omega$. These issues are resolved in the next step.


Step 3: 
 We now explicitly construct a sequence of sets $A_i$ from the beginning of the proof. To this end consider a sequence $Q_i$, $i\in \NN$, such that  every element of $\mathcal H$ appears in the sequence infinitely often. 
 We set $A_1=\emptyset$ and define inductively for $i\geq 2$
 the sets $A_i\subset \Omega$ by saying 
 that $\omega \in A_i$ if
\begin{enumerate}
    \item[5.4.4] $\omega$ is not contained in the union
$\cup _{k=1} ^{i-1} A_k$.
\item[5.4.5] $ \hat Q_i \geq G_{\omega}$ on the ball $B_{\frac 1 i} (q)$.
\item[5.4.6] $\Delta \hat Q_i (q) < C(\omega )$.
\end{enumerate}
By construction, the  subsets $A_i$ are pairwise disjoint.  

For every $\omega \in \Omega$,  we use Step 2 to find $O^{\omega} \in \mathcal H$  and $Q^{\omega}$   as in \eqref{eq: omega}.
 Since $Q^{\omega}$ occurs as  $Q_i$ for arbitrarily large $i$, we find  $i\in \mathbb N$ with $B_{\frac 1 i} (q) \subset O^{\omega}$.  Then $\omega \in A_k$, for some $1\leq k \leq i$.    Thus, the union of all $A_i$ is $\Omega$.

Conditions 5.4.4  and 5.4.6 above clearly define measurable subsets of $\Omega$.  Due to  continuity of all $\hat Q_i$ and $G_{\omega}$,  Condition 5.4.5 is equivalent to the statement 
$G_{\omega} (x) \geq \hat Q_i (x)$  for all 
$x $
in a fixed countable dense subset $\mathcal D_i $ of $B_{\frac 1 i} (q)$. Thus Condition 5.4.5 defines a measurable subset as well.  Therefore, $A_i$ is measurable.

Finally, for every $i=1,...$  and every $x\in B_{\frac 1 i } (q)$, we have:
$$f^{A_i} (x)  =\int _{A_i} G_{\omega} (x) \, d\nu (x)\leq \int _{A_i}  \hat Q_i (x) \, d\nu (\omega)  =  \nu (A_i)  \cdot \hat Q_i (x)\;.$$
Moreover,  $f^{A_i} (q)=0 =\hat Q_i (q)$, by \eqref{eq: newcond}. Hence,  by Definition \ref{def:barrier}, $\Delta _q (f^{A_i}) < C$ i.t.b.s., for all $C$ with $C>\nu (A_i) \cdot \Delta \hat Q_i (q)$.  Assuming 
 $\nu (A_i) >0$, this applies to 
$C=\int _{A_i} C(\omega) \, d\nu (\omega)$, due to Condition 5.4.6.

This proves \eqref{eq: decompose}  and finishes the proof of the Lemma.
\end{proof}

\section{Conclusion}
We recall  the following result, central for  our main theorem. 

\begin{Th}[Generau] \label{thm: generau}
For any $q\in M$ and any $p$ in the cut locus $\fC_q$ we have $\Delta _p d_q =-\infty$ in the barrier sense. 
\end{Th}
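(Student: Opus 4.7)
The plan is to split the cut locus as $\fC_q = \fC_q^+ \cup \fC_q^-$ and handle the two pieces by genuinely different arguments. As a common preliminary, observe that $d_q$ is positive and semiconcave on a neighborhood of any $p\neq q$: by Lemma \ref{lem: Cr} the function $d_q^2$ is $b$-concave on a small ball around $p$, and since $d_q = \sqrt{d_q^2}$ with $d_q>0$ locally, composing with the smooth concave function $\sqrt{\cdot}$ on $(0,\infty)$ preserves semiconcavity (with a constant depending on a positive lower bound for $d_q$). Hence Property 5.3.8 is available for $d_q$ itself.

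For $p \in \fC_q^+$, the first variation formula \eqref{eq:derivative-distance} gives
$$ D_p d_q^2(v) = -2\sup_i \langle v, w_i\rangle, $$
where the finite set $\{w_i\}$ collects the endpoint velocities at $p$ of the (at least two) shortest geodesics from $q$ to $p$, pointed back toward $q$. With two or more distinct $w_i$ this supremum of linear functionals is concave but not linear, and the same holds for $D_p d_q$ by the chain rule $D_p d_q = (2 d_q(p))^{-1} D_p d_q^2$. Applying Property 5.3.8 to the semiconcave function $d_q$ then yields $\Delta_p d_q = -\infty$ in the barrier sense.

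For $p \in \fC_q^-$ there is a \emph{unique} shortest geodesic $\gamma:[0,L]\to M$ from $q$ to $p$, and $p$ must be a conjugate point of $q$ along $\gamma$: otherwise $\exp_q$ would be a local diffeomorphism at $L\gamma'(0)$ and a standard argument shows that a non-conjugate cut point necessarily admits multiple minimizers, placing it in $\fC_q^+$. Choose a non-trivial Jacobi field $J$ along $\gamma$ with $J(0)=J(L)=0$; then $W:=J'(L)\in T_pM$ is non-zero and perpendicular to $\gamma'(L)$. The plan is to build, for each prescribed $C>0$, a smooth barrier $h_C \geq d_q$ near $p$ with $h_C(p)=L$ and $\Delta h_C(p) < -C$. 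The underlying geometric fact is that variation curves $t\mapsto \exp_{\gamma(t)}(V(t))$ with $V(0)=0$ and $V(L)$ close to $sW$ for small $s$ can be chosen with length $L + o(s)$: indeed $J(L)=0$ together with the vanishing of the index form $I(J,J)$ at the conjugate point kills the $s$-order term, so $d_q(\exp_p(sW)) \leq L + o(s)$ and similarly in direction $-W$. This "very concave" second-order behavior of $d_q$ in the $W$-direction at $p$ is then converted into explicit smooth barriers by constructing $\mathcal C^2$ hypersurfaces through $p$ tangent to $\gamma'(L)^\perp$ whose principal curvatures in the $W$-direction can be made arbitrarily negative; their graph functions (shifted by a linear correction capturing $\nabla d_q(p) = \gamma'(L)$) serve as the required $h_C$.

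The main obstacle is precisely this barrier construction in the $\fC_q^-$ case. The semiconcave function $d_q$ has a \emph{linear} first differential at such $p$, so Property 5.3.8 is not available, and the second-order degeneracy of $\exp_q$ at the conjugate direction $L\gamma'(0)$ must be translated into admissible $\mathcal C^2$ barriers with Laplacian below every prescribed threshold. This needs fine Jacobi-field asymptotics for $\exp_q$ near $L\gamma'(0)$ and careful second-variation/index-form comparison arguments, which is precisely the technical content supplied by \cite{generau2020laplacian}.
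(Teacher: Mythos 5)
The paper itself gives no independent proof of Theorem~\ref{thm: generau}: it is Generau's result, quoted directly from \citet[Theorem 1.6]{generau2020laplacian} for $\dim M\geq 2$, with the $\dim M=1$ (round circle) case handled separately by Example~\ref{ex:circle}. Your proposal is therefore doing more work than the paper does, not less, and the decomposition $\fC_q=\fC_q^+\cup\fC_q^-$ is a sound way to organize it. The $\fC_q^+$ branch is correct and self-contained: $d_q$ is $b$-concave near $p\neq q$ (either by the $\sqrt{\cdot}$-composition you describe, or directly from the $C_2$-concavity of $d_x$ established inside the proof of Lemma~\ref{lem: Cr}), $D_p d_q = (2d_q(p))^{-1}D_p d_q^2$ is a negative supremum over two or more distinct linear functionals and hence nonlinear, and Property~5.3.8 then yields $\Delta_p d_q=-\infty$ i.t.b.s. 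This also covers $\dim M=1$, since the only cut point on $\mathbb S^1$ is the antipode, which lies in $\fC_q^+$. Both here and for the paper the genuine content of the theorem is the conjugate-point case $\fC_q^-$, and there both you and the paper defer to Generau.

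The part you should not present as almost done is the $\fC_q^-$ sketch. First, the estimate you state, $d_q(\exp_p(sW))\leq L+o(s)$, is a first-order statement and carries no information beyond what the linearity of $D_p d_q$ and $W\perp\gamma'(L)$ already give; to produce barriers with arbitrarily negative Laplacian you would need a genuinely second-order fact, something of the shape $d_q(\exp_p(sW))\leq L - \phi(s)$ with $\phi(s)/s^2\to\infty$, or at least control that rules out any barrier with bounded Hessian. Second, the reasoning invoked -- $J(L)=0$ and $I(J,J)=0$ -- concerns fixed-endpoint variations; it does not directly bound $d(q,\exp_p(sW))$, which requires analyzing variations that \emph{move} the endpoint through the degenerate direction of $\exp_q$ at $L\gamma'(0)$. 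Extracting the needed asymptotics from the degeneracy of $d_q\exp_q$ at the conjugate direction is exactly the technical work carried out in \citet{generau2020laplacian}, and the heuristic as written would not compile into a proof. In short: your $\fC_q^+$ argument is a nice elementary addition (closer to Remark~6.2 applied to $d_q$ rather than $d_q^2$), but for $\fC_q^-$ you are, like the paper, citing Generau, and your sketch should be replaced by that citation rather than presented as an argument.
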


In fact, 
the result has been formulated by \citet[Theorem 1.6]{generau2020laplacian} under the assumption $\dim (M)\geq 2$.  However, if $\dim (M)=1$, then either $M=\RR$ and no point has cut points. Or $M$ is the round circle 
for which the statement has been shown in Example \ref{ex:circle}. 


\begin{Rm} 
In view of Property 5.3.7 (recall that $d_x$ is $1$-Lipschitz), Theorem \ref{thm: generau} implies (recall that $d_x^2$ is smooth outside of $\fC_q$)
\begin{eqnarray}\label{eq:Generau}
    \Delta_q d_x^2 = -\infty\mbox{ i.t.b.s. }\Leftrightarrow x \in \fC_q\,.
\end{eqnarray}
In contrast, Lemma \ref{lem: distance} asserts  
$$D_qd_x^2 \mbox{ is linear } \Leftrightarrow x \not\in \fC_q^+\,,$$ 
and hence, $\Delta_q d_x^2 = -\infty$ for all $x \in \fC_q^+$, by Property 5.3.8. Thus, 
$$ \Delta_qd_x^2 = -\infty\mbox{ while  } D_qd_x^2 \mbox{ is linear }\Leftrightarrow x \in \fC_q^-\,.$$
\end{Rm}

Thus, our main result, Theorem \ref{th:cpzero}, follows  from Theorem \ref{thm: generau} and the observations about Laplacian bounds collected above:

\begin{Th}\label{th:cp-zero} If $\mu \in M$ is  a local minimum of $F$  then $\Prb\{X \in \fC_\mu\} =0$
\end{Th}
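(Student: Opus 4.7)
The plan is to argue by contradiction. Suppose $p := \Prb\{X \in \fC_\mu\} > 0$. The strategy is to apply Lemma \ref{lem: integraldelta} to the family $G_x := d_x^2|_U$ with a piecewise dominating function $C(x)$ that forces the integrated bound $\int_M C\,d\Prb^X$ to be negative. This yields $\Delta_\mu F < 0$ in the barrier sense, contradicting Property 5.3.4 since $\mu$ is a local minimum of $F$.

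Concretely, I would fix $U = B_{r(\mu)}(\mu)$ with $r(\mu)$ provided by Lemma \ref{lem: Cr}, so that every $d_x^2$ is $b(x)$-concave on $U$ with $b(x) := C(\mu)\,(d(x,\mu)+1)$. By Assumption \ref{as:global}, $b$ is $\Prb^X$-integrable, and $y \mapsto \int_M d_x^2(y)\, d\Prb^X(x) = F(y)$ is finite on $U$. Fix a parameter $K > 0$ (to be chosen later) and define
$$
C(x) := \begin{cases} \dim(M)\cdot b(x) + 1, & x \in M \setminus \fC_\mu,\\ -K, & x \in \fC_\mu. \end{cases}
$$
For $x \in M \setminus \fC_\mu$, Property 5.3.6 applied to the $b(x)$-concave function $d_x^2$ yields $\Delta_\mu d_x^2 < C(x)$ in the barrier sense. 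For $x \in \fC_\mu$ we have $d_x(\mu) > 0$ (since the injectivity radius at $\mu$ is positive, forcing $\mu \notin \fC_\mu$), and $d_x$ is $1$-Lipschitz; Theorem \ref{thm: generau} gives $\Delta_\mu d_x = -\infty$ in the barrier sense, and Property 5.3.7 upgrades this to $\Delta_\mu d_x^2 = -\infty$ in the barrier sense, so in particular $\Delta_\mu d_x^2 < C(x) = -K$. Since $|C(x)| \leq \dim(M)\cdot b(x) + 1 + K$, the function $C$ is $\Prb^X$-integrable, so all hypotheses of Lemma \ref{lem: integraldelta} are met.

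Invoking Lemma \ref{lem: integraldelta} with $\Omega = M$, $\nu = \Prb^X$, $G_x = d_x^2|_U$ therefore yields
$$
\Delta_\mu F < \int_M C(x)\, d\Prb^X(x) \leq A - K\cdot p \quad \text{in the barrier sense,}
$$
where $A := \int_M (\dim(M)\,b(x)+1)\, d\Prb^X(x) < \infty$ is independent of $K$. Choosing $K > A/p$ gives $\Delta_\mu F < 0$ in the barrier sense, contradicting Property 5.3.4 applied to the local minimum $\mu$. Hence $p = 0$, as claimed.

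I expect the main technical point to be ensuring integrability of $C$ in tandem with the barrier-Laplacian bounds for all $x \in M$ on a single fixed neighborhood of $\mu$: the negative contribution $-K\cdot p$ from the cut locus must dominate uniformly over the positive contribution from $M \setminus \fC_\mu$, and this is only possible because Lemma \ref{lem: Cr} yields a semiconcavity constant $b(x)$ growing at most linearly in $d(x,\mu)$, hence integrable under the second-moment Assumption \ref{as:global}.
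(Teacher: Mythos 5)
Your proof is correct and takes essentially the same approach as the paper: both rely on Generau's theorem, Lemma \ref{lem: integraldelta}, and Properties 5.3.4, 5.3.6, 5.3.7 to reach a contradiction with the local minimality of $\mu$. The paper splits $F=F_1+F_2$ over $\fC_\mu$ and its complement, applies Lemma \ref{lem: integraldelta} only to $F_1$ with constant $C\to-\infty$, and recombines via Property 5.3.1, while you apply the lemma a single time over all of $M$ with a piecewise choice of $C(x)$ and tune $K>A/p$ — an equivalent bookkeeping of the same argument.
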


\begin{proof}
Assume the contrary. 
We write $F=F_1+F_2$ with 
$$F_1(q):=\int _{\fC_{\mu}} d^2_x (q)  \, d\Prb^X (x)  \; \;  ;\; \; F_2 (q):=\int _{M\setminus \fC_{\mu}} d^2_x (q)  \, d\Prb^X (x)\;.$$

Lemma \ref{lem: new} and Lemma \ref{lem: Cr}   show that $F_1$ and $F_2$ are $b_0$-concave  on a  ball $U$ around $\mu$, for some $b_0\in \RR$.  Due to Property 5.3.6. we  have i.t.b.s: 
$$\Delta _{\mu} F_2< b_0\cdot \dim (M) +1\;.$$

On the other hand,  
$\Delta _{\mu} F_1 =-\infty$ i.t.b.s due to Lemma  \ref{lem: integraldelta}.
Indeed, we can set $\Omega := \fC_\mu$ and  $G_\omega(q) := d^2_x(q)$ for $x=\omega \in \fC_\mu$ and $q\in U$. For every $x\in \fC_\mu$ the function $d_x^2$ satisfies
$\Delta _{\mu} (d_x^2)=-\infty$ by (\ref{eq:Generau}) in the barrier sense.
   Hence, the assumptions of Lemma \ref{lem: integraldelta} are satisfied with the function $b$ given by Lemma \ref{lem: Cr}  and \emph{any} $\Prb^X$-integrable function $C:\fC_\mu \to \RR$.  Choosing constant functions $C\to -\infty$, we deduce the claim.  

Now  Property 5.3.1 implies i.t.b.s
$$\Delta _{\mu} F =\Delta _{\mu } F_1 + \Delta _{\mu} F_2 =-\infty\,.$$
This contradicts Property 5.3.4.
\end{proof}


\section{Discussion}\label{scn:discussion}

\paragraph{Unnecessity of first order considerations}
The proof of our main theorem does not rely on 
Corollary \ref{cor:zero-derivative-at-mean} and 
on Theorem \ref{thm: le}, thus those intermediate steps could be omitted.  We decided to keep those parts  for sake of clarity.

\paragraph{First moments suffice.}
In order to have Fr\'echet means and that their cut loci carry no probability, it suffices to require the existence of a first moment $\int_M d_x(q)\,d\Prb^X(x)< \infty$ for one $q\in M$ only (and hence for all $q\in M$), in contrast to Assumption \ref{as:global}.  Indeed, for arbitrary but fixed $q_0 \in M$ define the \emph{Fr\'echet difference}
\begin{eqnarray}\label{eq:Frechet-difference} M \to \RR,\quad q \mapsto F_{q_0}(q) := F(q) - F(q_0) = \int_M \left(d^2_x(q) - d^2_x(q_0)\right)d\Prb^X(x)\,.\end{eqnarray}
As \cite{Sturm2003} noted, due to the triangle inequality, 
$$\left|d^2_x(q) - d^2_x(q_0)\right| \leq \left|d_x(q) + d_x(q_0)\right|   d(q,q_0)$$ 
the Fr\'echet difference is finite if first moments exist, 
and their minimizers agree with Fr\'echet means if second moments exist. Replacing $d^2_x$ by $d^2_x - d^2_x(q_0)$ in the proof of Lemma \ref{lem:semiconcave} yields at once that $F_{q_0}$ is similarly $b$-concave in $B_r(q)$. Hence, the arguments leading to Corollary \ref{cor:zero-derivative-at-mean} and Theorem \ref{th:cp-zero} carry over to minimizers of $F_{q_0}$. Therefore: 

\begin{Th}\label{th:cpzero-1} If $\mu \in M$ is a minimizer of the Fr\'echet difference (\ref{eq:Frechet-difference}) then $\Prb\{X \in \fC_\mu\} =0$.
\end{Th}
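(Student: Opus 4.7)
The plan is to re-examine the proof of Theorem \ref{th:cp-zero} with the integrand $d_x^2(q)$ replaced throughout by $g_x(q) := d_x^2(q) - d_x^2(q_0)$, verifying that every integrability requirement in Lemmas \ref{lem: new} and \ref{lem: integraldelta} is already met under the assumption of finite first moments.

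First I would note that by the Sturm triangle estimate stated just before the theorem,
$$|g_x(q)| \;\leq\; (d_x(q) + d_x(q_0))\, d(q,q_0) \;\leq\; (2\,d_x(q_0) + d(q,q_0))\, d(q,q_0)\,,$$
so for each fixed $q$ the map $x\mapsto g_x(q)$ is $\Prb^X$-integrable and $F_{q_0}$ is finite on all of $M$. Since $g_x$ and $d_x^2$ differ only by a function constant in $q$, they share all directional derivatives, Hessians, Laplacians, and semiconcavity bounds at every point. In particular, Lemma \ref{lem: Cr} applies verbatim to $g_x$, yielding $b(x)$-concavity on $U=B_r(q_1)$ with the same $b(x) = C(d(x,q_1)+1)$, a function which is itself $\Prb^X$-integrable under the first-moment hypothesis. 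Thus Lemma \ref{lem: new} gives local semiconcavity of $F_{q_0}$ exactly as in Lemma \ref{lem:semiconcave}, and Lemma \ref{lem: dif0} yields $D_\mu F_{q_0}\equiv 0$ at any local minimum $\mu$.

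For the second-order step I would suppose $\Prb\{X\in \fC_\mu\}>0$ toward a contradiction and split $F_{q_0} = F_1 + F_2$ as the integrals of $g_x$ over $\fC_\mu$ and its complement, respectively. Local $b_0$-concavity of $F_2$ on a ball around $\mu$ follows from the argument above, giving $\Delta_\mu F_2 < \infty$ i.t.b.s.\ via Property 5.3.6. To obtain $\Delta_\mu F_1 = -\infty$ i.t.b.s., I would invoke Lemma \ref{lem: integraldelta} with $\Omega := \fC_\mu$, $G_\omega := g_\omega$, $b(\omega)$ as before, and $C(\omega)$ any constant function. Because $g_\omega$ differs from $d_\omega^2$ by a constant in $q$, Generau's Theorem \ref{thm: generau} combined with Property 5.3.7 ensures $\Delta_\mu g_\omega = -\infty$ i.t.b.s., so hypothesis 5.4.2 holds for every constant $C$; sending $C\to -\infty$ delivers $\Delta_\mu F_1 = -\infty$ i.t.b.s. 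Property 5.3.1 then produces $\Delta_\mu F_{q_0} = -\infty$ i.t.b.s., contradicting minimality of $F_{q_0}$ at $\mu$ via Property 5.3.4.

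The only real obstacle is the bookkeeping of integrability under first rather than second moments. The Sturm estimate handles pointwise integrability of $g_x$; the linear-in-distance bound $b(x) = C(d(x,q_1)+1)$ inherited from Lemma \ref{lem: Cr} handles the semiconcavity side; and the freedom in choosing $C(\omega)$ in Lemma \ref{lem: integraldelta} avoids any further integrability burden on the Laplacian bounds. No other modifications of the earlier proofs are needed.
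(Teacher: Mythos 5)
Your proposal is correct and follows essentially the same route the paper sketches: replace the integrand $d_x^2$ by $g_x = d_x^2 - d_x^2(q_0)$, note that this shift by a constant-in-$q$ leaves all first-order data, semiconcavity bounds, and Laplacian bounds at each $q$ unchanged, use the Sturm estimate for pointwise integrability under first moments, and then rerun Lemma \ref{lem:semiconcave}, Corollary \ref{cor:zero-derivative-at-mean}, and the proof of Theorem \ref{th:cp-zero} verbatim. Your elaboration correctly identifies the only point requiring care — that the integrability hypotheses in Lemmas \ref{lem: new} and \ref{lem: integraldelta} are satisfied with $b(x)=C(d(x,q_1)+1)$ linear in distance and with the $C(\omega)$ free to be any constant — which is exactly the bookkeeping the paper compresses into two sentences.
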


\paragraph{Nonsmoothness of Fr\'echet functions}
We  verify that  Fr\'echet functions $F$ may not  be $\mathcal C^1$ on any open non-empty subset.
Recall, that for points   $q\neq x$ in a Riemannian manifold $M$, the point $q$ is called a \emph{critical point}  for $x$ if $D_qd_x(v) \leq 0$ for all $v\in T_qM$. (In view of (\ref{eq:derivative-distance}) this is equivalent to the definition in \cite{grove1993critical}). If $d_x$  assumes its  maximum in $q$ then $q$ is a critical point for $x$. 
If $q$ is a critical point of $x$ then $x\in \fC^+_q$ and therefore $q\in \fC^+_x$. This  provides a simple deduction of the last conclusion of the following theorem with first (and major) part from \citet[Theorem A]{galaz2015every}. 

\begin{Th}[Galaz-Garcia--Guijarro] \label{thm: gui}
For every compact Riemannian manifold and every $q\in M$ there exists a point $x\in M$ such that $q$ is a critical point for $x$. In particular, $q\in \fC_x^+$ in this case.
 \end{Th}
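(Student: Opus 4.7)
The statement splits naturally into two claims: (a) existence of some $x \in M$ such that $q$ is a critical point of $d_x$, and (b) the consequence $q \in \fC_x^+$ for such an $x$. I would handle these separately.

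For (a), the guiding observation, already recorded in the text preceding the theorem, is that any global maximum $p$ of $d_x$ is automatically a critical point of $d_x$, since $D_p d_x(v) \leq 0$ holds at any maximum. Thus it would suffice to exhibit $x$ for which $q$ is a global maximum of $d_x$, i.e. a farthest point from $x$. The subtlety is that this farthest-point relation is not obviously surjective: the naive attempt of letting $x_0$ maximize $d_q$ only shows that $x_0$ is critical for $q$, not that $q$ is critical for $x_0$, because "critical for" is not a symmetric relation. The actual argument that such an $x$ exists for every $q$ is the content of [Galaz-Garcia--Guijarro, Theorem A], whose proof is a critical-point-theoretic deformation argument on the compact manifold $M$; I would simply invoke it here rather than reprove it. This is the hard part.

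For (b), the argument is short and uses only the first-order results already assembled in the paper. Assume $q$ is critical for $d_x$ and suppose, for contradiction, that $x \notin \fC_q^+$. Then by Lemma \ref{lem: distance} the one-sided differential $D_q d_x^2 : T_qM \to \RR$ is a single linear map, namely $v \mapsto -2\langle v, u\rangle$, where $u \in T_qM$ is the initial velocity of the unique minimizing geodesic from $q$ to $x$, with $|u|=d(q,x)>0$. Dividing by $2 d(q,x)$, the differential $D_q d_x$ is also linear, of the form $v \mapsto -\langle v, u\rangle/|u|$, and is strictly positive on the open half-space $\{v : \langle v, u\rangle < 0\}$ of $T_qM$. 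This contradicts the criticality condition $D_q d_x(v) \leq 0$ for all $v$. Hence $x \in \fC_q^+$, and the Bishop symmetry of the cut locus recalled at the beginning of Section \ref{sec: cut} upgrades this to $q \in \fC_x^+$.

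The main obstacle is thus entirely in the first assertion; once it is imported from [Galaz-Garcia--Guijarro], the in-particular clause is essentially one application of Lemma \ref{lem: distance} together with the criticality condition.
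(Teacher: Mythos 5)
Your proposal follows the paper's own route: cite Galaz-Garcia--Guijarro, Theorem A, for the existence of a point $x$ with $q$ critical for $d_x$, then deduce $x\in\fC_q^+$ from the first variation formula and Lemma \ref{lem: distance}, and finally pass to $q\in\fC_x^+$ by symmetry; part (b) is worked out correctly. One small correction: the final symmetry step $x\in\fC_q^+\Rightarrow q\in\fC_x^+$ is not Bishop's theorem, which only asserts $x\in\fC_q\Leftrightarrow q\in\fC_x$ for the closures; rather it follows directly from Definition \ref{def: cut}, since being joined by at least two shortest geodesics is a symmetric relation in $q$ and $x$.
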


From this result we deduce:

\begin{Th} \label{thm: guijarro}
Let $M$ be a compact Riemannian manifold.
Let $x_j$ be a dense sequence in $X$. 
   Consider the random variable $X$ with distribution $$\Prb^X=\sum_{j=1} ^{\infty} 2^{-j} \cdot \delta _{x_j}\,,$$
   a countable sum of rescaled Dirac masses $\delta_{x_j}$. 
   
   Then the set $\mathcal S$  of points $q \in M$, at which the directional derivative $D_qF$ is non-linear, is dense in $M$.
\end{Th}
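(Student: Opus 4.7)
The plan is to identify $\mathcal{S}$ explicitly with a union of cut-plus loci and then establish density by an openness argument anchored on Theorem \ref{thm: gui}. By Theorem \ref{thm: le}, $D_qF$ is non-linear iff $\Prb\{X\in\fC_q^+\}>0$, which for the atomic measure $\Prb^X=\sum_j 2^{-j}\delta_{x_j}$ is equivalent to $x_j\in\fC_q^+$ for some $j$. Using the symmetry $x\in\fC_q^+\Leftrightarrow q\in\fC_x^+$ recalled in Section \ref{sec: cut}, this rewrites as $\mathcal{S}=\bigcup_{j\geq 1}\fC_{x_j}^+$, so the task reduces to showing that this union is dense in $M$.

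Given any nonempty open ball $B\subset M$, the strategy is to produce a nonempty open $V\subset M$ with the property that $\fC_y^+\cap B\neq\emptyset$ for every $y\in V$; density of $\{x_j\}$ then forces some $x_j\in V$, yielding the desired intersection $\fC_{x_j}^+\cap B\neq\emptyset$. To locate $V$, I would pick $p\in B$ and apply Theorem \ref{thm: gui} to obtain $y^*\in M$ with $p\in\fC_{y^*}^+$. Let $\gamma_1,\gamma_2$ be two distinct minimizing geodesics from $p$ to $y^*$, with initial velocities $v_1\neq v_2\in T_pM$ of common norm $d(p,y^*)$; the candidate $V$ is a small neighborhood of $y^*$.

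Under the generic assumption that $\gamma_1,\gamma_2$ are the only minimizers between $p$ and $y^*$ and that $v_1,v_2$ are non-conjugate for $\exp_p$, local inversion of $\exp$ near $v_1,v_2$ yields smooth length functions $\ell_i(q,y)=|v_i(q,y)|$ on a neighborhood of $(p,y^*)$ in $M\times M$ with $\exp_q(v_i(q,y))=y$. An Arzel\`a--Ascoli argument forces $d(q,y)=\min(\ell_1,\ell_2)(q,y)$ on a possibly smaller neighborhood: any strictly shorter geodesic from nearby $(q_k,y_k)\to(p,y^*)$ would subsequentially converge to a minimizer from $p$ to $y^*$ distinct from $\gamma_1,\gamma_2$, contradicting the standing assumption. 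The first variation formula (\ref{eq:derivative-distance}) then gives the non-vanishing $q$-gradient $\nabla_q(\ell_1-\ell_2)|_{(p,y^*)}=(v_2-v_1)/|v_1|$, so the implicit function theorem produces, for each $y$ near $y^*$, a smooth hypersurface $H_y=\{\ell_1(\cdot,y)=\ell_2(\cdot,y)\}$ passing through a continuously varying $q^*(y)\in B$ with $q^*(y^*)=p$. On $H_y$ both $\gamma_i$'s realize $d(q^*(y),y)$, hence $q^*(y)\in\fC_y^+\cap B$, as required.

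The main obstacle lies in the standing genericity assumption on $(p,y^*)$: extra minimizers $\gamma_3,\gamma_4,\ldots$ or conjugate velocities $v_i$ would invalidate either the local inversion of $\exp_p$ or the Arzel\`a--Ascoli step. Both issues can be circumvented by replacing the initial $p$ from Theorem \ref{thm: gui} with a nearby point $p'\in\fC_{y^*}^+\cap B$ enjoying both conditions---the points of $\fC_{y^*}^+$ with conjugate velocities or with three or more minimizers form a proper, lower-dimensional subset near $p$ (by the stratification of the cut locus in \citet{bishop1977decomposition} and the conjugate locus having positive codimension in the tangent bundle), so a regular $p'$ can be chosen in any neighborhood of $p$ and in particular within the open set $B$. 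The implicit function argument above then applies verbatim with $p$ replaced by $p'$.
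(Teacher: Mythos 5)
Your identification $\mathcal S=\bigcup_j\fC_{x_j}^+$ via Theorem \ref{thm: le} and the symmetry $x\in\fC_q^+\Leftrightarrow q\in\fC_x^+$ matches the paper's starting point. From there, however, you attempt a direct argument (perturb $y^*$ and track $\fC_y^+$ into $B$), whereas the paper argues by contradiction: if $B_{2r}(q)$ met no $\fC_{x_j}^+$, then it would meet no $\fC_{x_j}$, so each minimizing geodesic from $x_j$ to $q$ would extend minimizingly past $q$ by length $r$; this property is closed under convergence, so by density of $\{x_j\}$ no $x\in M$ would have $q$ as a cut point, contradicting Theorem \ref{thm: gui}. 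That route is short and needs only compactness of limits of minimizing geodesics, with no regularity input about the cut locus.

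The gap in your argument is the final perturbation step. You need a point $p'\in\fC_{y^*}^+\cap B$ joined to $y^*$ by exactly two minimizing geodesics whose velocities are nonconjugate, and you assert such $p'$ is dense near $p$ because the ``bad'' set is lower-dimensional, citing \citet{bishop1977decomposition} and the codimension of the conjugate locus. This fails already on the round sphere: there $\fC_{y^*}^+=\{-y^*\}$ is a single point, it is a conjugate cut point, and it is joined to $y^*$ by a continuum of minimizing geodesics, so the set of ``regular'' $p'$ you rely on is empty, not merely lower-dimensional. More generally, \citet{bishop1977decomposition} does not furnish the stratification you invoke, and the statement that conjugate or higher-multiplicity cut points form a proper lower-dimensional subset of $\fC_{y^*}^+$ is not true for arbitrary smooth metrics. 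Without this, neither the local inversion of $\exp_p$ nor the uniqueness step in your Arzel\`a--Ascoli argument goes through. (The weaker claim you actually need --- that $\fC_y^+\cap B\neq\emptyset$ for $y$ near $y^*$ --- may well be true, but it does not follow from the genericity route; the paper's contrapositive formulation is precisely how one avoids having to prove any such lower semicontinuity of $\fC_y^+$.)
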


\begin{proof}
Assume that the closure  of $\bar {\mathcal S} $ is not all of $M$.  Then we find some open ball $B_{2r}(q)$ which  does not intersect $\mathcal S$. 
By Theorem \ref{thm: le}, the ball $B_{2r} (q)$
cannot intersect the subset $\fC^+ _{x_j}$ for every $j=1,...$. Since $\fC_{y}$ is the closure of $\fC^+_{y}$ for all $y \in M$. Hence, $B_{2r} (q)$ does not contain any cut points of any of the points $x_j$.

 Therefore,  the unique minimizing geodesic from $x_j$ to $q$  extends as a  minimizing geodesic beyond $q$ at least by length $r>0$. 
This condition is stable under convergence. Since the sequence $x_j$ is dense in $M$, we deduce that for any $x\in M$ there is a minimizing geodesic starting in $x$ and containing $q$ in its interior.

Hence, $q$ is not a cut point of any  $x\in M$. But this is impossible, due to  Theorem \ref{thm: gui}. 
\end{proof}


\paragraph{Exponents different from two.}
Some structural results obtained above apply to more general \emph{Fr\'echet $p$-means}, i.e. minimizers $\mu$ of $F^{(p)}:=
\int_M d^p_x\,d\Prb^X(x)$, for some  values of $p\in [1, \infty)$.

If $p>2$ then Theorem \ref{th:cpzero} remains valid, i.e. $\Prb\{X\in \fC_\mu\}=0$ since all statements apply literally, just changing the exponent $2$ to $p$ in the appropriate places  and suitably adapted constants, e.g. in \eqref{eq: quadrat}, the function $\hat b$ would be replaced by $ps^{p-1} (b+(p-1)sL^2)$, and similarly, 5.3.7 remains valid.


One difference between the cases $p\geq 2$ and $p<2$ appears in Lemma 3.1.  While the second part of the proof, controlling the semiconcavity of distance functions to points far from $q$, applies literally for all $p\geq 1$ with (\ref{eq: quadrat}) adapted as above, the first part does not.  Indeed, for $1\leq p<2$, the $p$-power of the distance functions $d_x^p$ is not semiconcave at the point $q=x$. For $x\neq q$ the function $d_x^p$ is $b$-concave around $q$ for some number $b=b(x)$, but $b$ goes to infinity as $x$ converges to $q$.


 



This does not play a role for first order considerations. In analogy to (\ref{eq:derivative-distance}), cf. also \citet[Lemma 2]{LeBarden2014}, for all $p\geq 1$ and all $x,q\in M$ with $x\neq q$, $D_qd_x^p$ is linear if and only if $x\in M \setminus \fC_q^+$. Further, due to 
$$D_q (d_q^p)(v) = \lim_{t\searrow 0}t^{p-1}\|v\|^{p}\quad \mbox{ for all }v\in T_qM\,,$$
$D_q(d_q^p)$ is linear (even the zero map) for $p>1$ but not for $p=1$. This gives the analog of Lemma \ref{lem: distance}. Again utilizing (\ref{eq:derivative-distance}), cf. also \citet[Lemma 2]{LeBarden2014}, we have
$$ |D_q d_x^p(v)| \leq p d^p_x(q) \|v\| \leq pr\|v\| $$
 for all $x,q$, $x\neq q$, in a sufficiently small open set $U$ of diameter $r>0$. Thus, by dominated convergence,
\begin{eqnarray*}
 D_q F_1^{(p)}  = \int_{U} D_q(d_x^p) \,d\Prb^X(x)&\mbox{ for }&F_1^{(p)}:=\int_{U} d_x^p \,d\Prb^X(x)\,,
\end{eqnarray*}
for all $p\geq 1$. 

For $p>1$ (then $D_q(d_q)^p \equiv 0$), this gives the "missing part" of Lemma \ref{lem: Cr}, thus asserting that $D_q F_1^{(p)}$ is linear for all $q\in M$. Since $F^{(p)}-F_1^{(p)}$ is $b$-concave for some $b\in \RR$ (see reasoning at the beginning of this topic)  $D_q F^{(p)}$ is concave and at a local minimum $\mu$, $F^{(p)}$ is differentiable with $D_\mu F^{(p)}\equiv 0$. This  gives the extension of Corollary \ref{cor:zero-derivative-at-mean} for  $p>1$. Thus Theorem \ref{thm: le} extends to the case $p>1$ and so does Corollary \ref{cor: +}: $\Prb\{X \in \fC_\mu^+\} = 0 $.

For $p=1$, since $D_qd_q$ is not linear, not even concave, the extensions of Corollary \ref{cor:zero-derivative-at-mean}, Theorem \ref{thm: le} and Corollary \ref{cor: +} to $p=1$ only remain valid under the additional assumption of $\Prb\{X = \mu\}=0$. Indeed, \citet{LeBarden2014} give an example of four point masses on the unit circle with $\Prb\{X =\mu\}\neq 0 \neq \Prb\{X \in \fC_\mu^+\}$.

For $p\in (1,2)$ or for $p=1$  and the additional assumption $ \Prb\{X =\mu \} =0$,
the validity of  Theorem \ref{th:cpzero}  requires additional investigations which are beyond the scope of this paper.

\paragraph{Noncomplete metrics.}  A few comments about the validity of Theorem \ref{th:cpzero} for noncomplete Riemannian manifolds $M$.

In the case of non-complete Riemannian manifolds,  squared distance functions to points
remain semiconcave  in the controlled way provided by Lemma \ref{lem: Cr}, with essentially the same proof, cf. \citet[Section 2.5]{kapovitch2021remarks}.  Therefore, the Fr\'echet function is semiconcave in this case.

One important complication in the non-complete situation  is that  Fr\'echet means may not exist at all.  Another major complication concerns the definition and properties of the cut locus.  Neither our Definition \ref{def: cut} nor the classical definition of the cut locus recalled in Section \ref{sec: cut} are meaningful, because shortest geodesics may not exist between  pairs of points.    The natural generalization would be the following one, cf. Lemma \ref{lem: distance} 
above and \cite{mantegazza2003hamilton}:
We denote by $\fC^+_{q}$ the set of points $x$ in $M$ at which the (one-sided directional) differential $D_xd^2_q$ is not linear and we set 
$\fC_q$ to be the closure of $\fC^+_q$.

Using these notions, the first order analysis  and the proof of Corollary \ref{cor: +} apply without changes in this noncomplete situation.

However, the  closure $\fC_q$ of 
$\fC^+_q$ might be much larger  than in the complete case (for a related example, cf. \citep[pages 7--8]{mantegazza2003hamilton}). The results of \citet{generau2020laplacian} do not apply and we expect our Theorem \ref{th:cpzero}  to be wrong without additional assumptions. These issues deserve further research.

\paragraph{Stickiness.} 
The phenomenon that strong laws of large numbers and central limit theorems may behave nonclassically on singular spaces was first observed by \cite{basrak2010limit}. The name \emph{stickiness} was coined by \cite{HHMMN13} and more refined, this behavior of non-Gaussian limiting distributions at lower rates was called \emph{sample stickiness} by \cite{lammers2023sticky}, delineating it from other \emph{sticky flavors}, one of which 
is \emph{directional stickiness}:  A random variable $X$ on a Riemannian manifold $M$ with Fr\'echet  function $F = \int_M d_x^2\,d\Prb^X(x)$ and Fr\'echet mean $\mu \in M$ is directionally sticky if
$$ D_\mu F(v) > 0 \mbox{ for all }0 \neq v \in T_\mu M\,.$$

Since  \citep{lammers2023sticky} showed that on complete Riemannian manifolds all flavors of stickiness are equivalent, our
%
%
Corollary \ref{cor:zero-derivative-at-mean} ($D_\mu F \equiv 0$) ensures that there can be no stickiness on complete Riemannian manifolds.  This had already been claimed by \citet{LeBarden2014}, however, the proof  applied only to some distributions, as explained in the introduction.   

Moreover, as detailed under the previous two topics, there is no stickiness possible for (possibly noncomplete) Riemannian manifolds and Frechet $p$-means with $p>1$. Finally, every Fr\'echet $1$-mean $\mu$ is nonsticky if $\Prb\{X=\mu\} =0$.






\paragraph{Beyond the Riemannian world}
Random variables have been extensively studied in metric spaces, more general than Riemannian manifolds (e.g. \cite{basrak2010limit,mattingly2023central,tran2023central,lueg2024foundations}), in particular, in spaces with curvature bounded from one side (e.g. \cite{bacak2014computing,HHMMN13,NyeTangWeyenbergYoshida2017,barden2018limiting,lammers2024testing}) in the sense of Alexandrov, \cite{sturm2003probability}, \cite{alexander2024alexandrov}.

In \citet[p. 707]{LeBarden2014}, the authors discuss  the problem of generalizing the findings of that paper to spaces with one-sided curvature bounds.

On  spaces with upper curvature bounds, the so-called CAT$(\kappa)$-spaces, no meaningful generalization is possible.  Indeed, in this case distance functions tend to be very convex
and directional stickiness 
can occur on tangent cones
(see also \cite{H_Mattingly_Miller_Nolen2015,mattingly2023geometry,mattingly2023shadow}).

On the other hand, first order properties of Fréchet functions and Fréchet means transfer literally to Alexandrov spaces with curvature bounded from below.
 We refer to \citet{alexander2024alexandrov}, \citet{petrunin06semiconcave} for the theory of such spaces and for semiconcave functions  thereon.  We will freely use the notions from \citet{petrunin06semiconcave} below.  
 All results from Sections 2-4 remain valid literally, the only difference is that $b$-concave functions need to be assumed locally Lipschitz, a property automatically satisfied in the Riemannian setting.  The result
 which follows  in the same way  is:
 \begin{Th} \label{Th: last}
    Let $X$ be a random variable  on some Alexandrov space $M$ with a lower curvature bound. Assume that $X$ has finite second moments and let $F:M\to \RR$ be the corresponding Fréchet function and let $\mu\in M$ be a Fréchet mean.  Then $D_{\mu} F:T_{\mu} M\to \RR$ is the zero map.
 \end{Th}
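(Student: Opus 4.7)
The plan is to transfer the proof of Corollary \ref{cor:zero-derivative-at-mean} to the Alexandrov setting, relying on the assertion preceding the theorem that the results of Sections 2--4 carry over once local Lipschitz continuity of $b$-concave functions is imposed by hand. The argument splits into three steps: semiconcavity of the squared distance functions $d_x^2$, semiconcavity of $F$ via integration, and a minimum principle for semiconcave functions on Alexandrov spaces.

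First I would establish the analog of Lemma \ref{lem: Cr}: for every $q_0\in M$ there are $r>0$ and $C>0$ such that, for every $x\in M$, the function $d_x^2$ is $b(x)$-concave on the ball $B_r(q_0)$ with $b(x)=C(d(x,q_0)+1)$. This follows from Toponogov-style comparison in Alexandrov spaces with curvature bounded below (see \citet{alexander2024alexandrov,petrunin06semiconcave}), applied first to the $1$-Lipschitz functions $d_x$ near $q_0$ and then to $d_x^2$ via the Alexandrov analog of \eqref{eq: quadrat}; local Lipschitz continuity of $d_x$ is automatic.

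Second, I would integrate with the Alexandrov analog of Lemma \ref{lem: new}, whose proof invokes only concavity of real functions along geodesics and monotone convergence and therefore transfers verbatim. Combined with the first step and the finite second moment assumption, this yields some $b_0>0$ such that $F$ is $b_0$-concave on a ball $B_{r_0}(\mu)$.

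Finally, I would apply the Alexandrov analog of Lemma \ref{lem: dif0}: if $f$ is semiconcave near $\mu$ and $\mu$ is a local minimum of $f$, then the positively homogeneous concave map $D_\mu f:T_\mu M\to \RR$ vanishes identically. Since $\mu$ is a local minimum of $F$, the one-sided derivative along any geodesic leaving $\mu$ satisfies $D_\mu F(\xi)\geq 0$. The main obstacle is the reverse inequality: the Euclidean proof of Lemma \ref{lem: dif0} implicitly uses the symmetry $v\mapsto -v$ of $\RR^n$ (through the fact that a concave function on $\RR^n$ which attains a minimum is constant), whereas the tangent cone $T_\mu M$ over the space of directions $\Sigma_\mu M$ does not carry this symmetry in general. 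The cleanest resolution is to invoke the gradient theory for semiconcave functions on Alexandrov spaces developed in \citet{petrunin06semiconcave}: the gradient $\nabla_\mu F\in T_\mu M$ is defined, it vanishes at every critical point of $F$ (in particular at the minimum $\mu$), and its defining inequality forces $D_\mu F(\xi)\leq 0$ for all $\xi\in T_\mu M$. Combined with $D_\mu F\geq 0$ this gives $D_\mu F\equiv 0$.
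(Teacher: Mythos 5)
Your first two steps and your identification of the obstacle are exactly right: semiconcavity of $d_x^2$ and of $F$ carry over to the Alexandrov setting, and the genuine difficulty is the failure of the central symmetry $v\mapsto -v$ in the tangent cone $T_\mu M$. But the resolution you propose is circular. In Petrunin's gradient calculus the gradient $\nabla_\mu f$ of a semiconcave $f$ is characterized by $D_\mu f(\xi)\leq\langle\nabla_\mu f,\xi\rangle$ for all $\xi\in T_\mu M$ (with equality for $\xi=\nabla_\mu f$), and a \emph{critical point} is, by definition, a point where $\nabla_\mu f=0$, equivalently where $D_\mu f\leq 0$. For a semiconcave function this condition holds automatically at local \emph{maxima}, not at local minima. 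At a local minimum one only gets $D_\mu f\geq 0$ for free; the assertion that $\mu$ is a critical point, i.e.\ that $D_\mu f\leq 0$, is precisely what must be proved, not an input the gradient theory hands you. A toy example shows the gap: on the cone $T_\mu M=[0,\infty)$ (which has boundary), $f(t)=t$ is concave and has its minimum at $0$, yet $\nabla_0 f=1\neq 0$.

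The paper closes this gap differently, via Lemma \ref{lem: petr}: the Alexandrov analogue of the Euclidean statement ``a concave function on $\RR^n$ with a minimum is constant.'' Its proof uses Petrunin's quasi-geodesics: every geodesic emanating from the minimum point extends to a bi-infinite quasi-geodesic, $D_\mu F$ restricts to a concave function on $\RR$ along any such curve, and a one-variable concave function with an interior minimum is constant. This restores precisely the two-sidedness that the symmetry $v\mapsto -v$ provided in the Euclidean proof of Lemma \ref{lem: dif0}. Because the tangent cone (or $M$ itself) may have boundary, the paper additionally passes to the doubling $2M$ before taking differentials. One can of course phrase the conclusion as ``$\nabla_\mu F=0$,'' but establishing that equality is the substance of Lemma \ref{lem: petr}, not a consequence of the mere existence of the gradient.
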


     Some comments on the adjustments needed in the proof.
The only Euclidean/Riemannian  result used  "between the lines" in our first order considerations  was the "trivial" observation that a concave function
on a Euclidean space is constant if it assumes a minimum (see Lemma \ref{lem: dif0}). The corresponding statement in Alexandrov geometry is much less obvious, due to the possible nonextendability of geodesics:
\begin{Lem}  \label{lem: petr}
Let $M$ be an Alexandrov space and   let $f:M\to \RR$ be a concave, Lipschitz continuous  function. Let $f$ assume a minimum on $M$.

If either $M$ has empty boundary $\partial M$ or if $\partial M$ is non-empty and $f$ extends as a concave function to the \emph{doubling} of $M$, then $f$ is constant.     
\end{Lem}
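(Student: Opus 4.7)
The plan is to first reduce to the case $\partial M = \emptyset$ via the doubling, and then to propagate the minimum value along a curve passing through the minimum point whose $t=0$ is interior, so that concavity forces the function to be constant. If $\partial M \neq \emptyset$, the doubling $DM$ is an Alexandrov space with the same lower curvature bound and empty boundary (Perelman). By hypothesis, $f$ extends to a concave function $\tilde f$ on $DM$, which inherits the Lipschitz constant and the same minimum value. Hence it suffices to prove the lemma under $\partial M = \emptyset$.

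Assume $\partial M = \emptyset$ and let $p \in M$ be a minimum point of $f$. Given any $q \in M$, we show $f(q) = f(p)$. Choose a shortest geodesic $\gamma : [0, L] \to M$ from $p$ to $q$, existing by the Hopf--Rinow theorem for complete finite-dimensional Alexandrov spaces, and set $v := \gamma'(0) \in \Sigma_p M$. By the quasigeodesic theory of \citet{petrunin06semiconcave} on boundaryless Alexandrov spaces, a quasigeodesic issues from $p$ in every direction; choose one $\eta : [0, \varepsilon] \to M$ with $\eta(0) = p$ and initial direction $-v$. Concatenate into $\tilde\gamma : [-\varepsilon, L] \to M$ by $\tilde\gamma(t) = \eta(-t)$ for $t \leq 0$ and $\tilde\gamma(t) = \gamma(t)$ for $t \geq 0$, and set $g(t) := f(\tilde\gamma(t))$.

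Since composition with a quasigeodesic preserves concavity of concave functions, $g$ is concave on each of $[-\varepsilon, 0]$ and $[0, L]$. The one-sided derivatives at the junction are $g'(0^+) = D_p f(v)$ and $g'(0^-) = -D_p f(-v)$, so Lemma \ref{lem: sym} yields
\[
g'(0^-) - g'(0^+) = -\bigl(D_p f(v) + D_p f(-v)\bigr) = -D_p^{sym} f(v) \geq 0,
\]
which is exactly the slope-drop condition upgrading piecewise concavity to concavity of $g$ on the full interval $[-\varepsilon, L]$. Since $p$ is a global minimum of $f$ we have $g(t) \geq g(0)$ throughout, and since $0$ is interior, concavity forces $g$ to be constant. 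In particular $f(q) = g(L) = g(0) = f(p)$, so $f$ is constant.

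The principal technical point is the appeal to \citet{petrunin06semiconcave}'s quasigeodesic machinery: we need both the existence of a quasigeodesic in every tangent direction at $p$ and the fact that composition with a concave function stays concave. Both properties genuinely require the boundaryless hypothesis, which is why the statement must either assume $\partial M = \emptyset$ outright or else assume that $f$ extends concavely to the doubling. Without some such hypothesis the conclusion fails, as the elementary example $f(x) = 1 - x$ on $M = [0, 1]$ shows: $f$ is concave and Lipschitz and attains its minimum at $x = 1$, but is not constant, the obstruction being that one cannot extend backwards through the boundary minimum.
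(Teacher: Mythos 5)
Your argument follows the same broad plan as the paper: extend the geodesic from the minimum point $p$ to $q$ backward past $p$, obtain a curve on which $f$ restricts to a concave function, and then use that a concave function of one real variable with an interior minimum is constant. The reduction to $\partial M = \emptyset$ via the doubling is also exactly what is intended. However, the specific construction of the backward extension contains a gap.

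You choose a quasi-geodesic $\eta$ issuing from $p$ with ``initial direction $-v$'' and then verify concavity of $g = f\circ\tilde\gamma$ across $t=0$ by the slope-drop computation $g'(0^-) - g'(0^+) = -\bigl(D_pf(v) + D_pf(-v)\bigr) = -D_p^{\mathrm{sym}}f(v) \geq 0$. This presupposes that the antipodal direction $-v$ exists in $\Sigma_p M$, i.e.\ that there is a direction at angle $\pi$ from $v$. In a singular Alexandrov space this can fail even when $\partial M = \emptyset$: at a cone point whose space of directions is a quotient $\mathbb S^{n-1}/\Gamma$ by a nontrivial free isometric action (e.g.\ the Euclidean cone over the Poincar\'e homology sphere), $\Sigma_p$ has diameter strictly less than $\pi$ and no direction has an antipode. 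Without $-v$, the inequality $D_pf(v) + D_pf(w) \leq 0$ is unavailable for a generic direction $w$ of $\eta$ --- indeed $D_pf \geq 0$ because $p$ is a minimum, so the sum has the wrong sign --- and the slope-drop argument collapses. Note also that your concatenation $\tilde\gamma$ is not claimed to be a quasi-geodesic, so you cannot fall back on automatic concavity of $f\circ\tilde\gamma$.

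The paper instead invokes the Perelman--Petrunin extension theorem directly: on a boundaryless Alexandrov space, any (unit-speed shortest) geodesic $\gamma$ can be prolonged to a bi-infinite quasi-geodesic $\tilde\gamma:\RR\to M$. The prolongation past $p$ need not run in any antipodal direction; it is simply some quasi-geodesic. Since $f$ is concave and restricts to a concave function on every quasi-geodesic, $g = f\circ\tilde\gamma$ is concave on all of $\RR$ and attains its minimum at the interior parameter corresponding to $p$, hence is constant, giving $f(q)=f(p)$ with no slope-drop computation and no appeal to antipodal directions. If you replace your ad hoc concatenation with this extension theorem, the rest of your argument goes through.
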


The proof of this lemma is achieved by observing that such $f$ restricts to a concave function on any \emph{quasi-geodesic} in $X$, that a concave function on $\RR$ with a minimum is constant and, most importantly, that any geodesic connecting the minimum point of $f$ to any other point on $M$ can be extended to a bi-infinite quasi-geodesic \cite[Section 5.1, Appendix A]{petrunin06semiconcave}.

Note  that in order to deduce Theorem \ref{Th: last} from Lemma \ref{lem: petr} in the case $\partial M\neq \emptyset$, one needs to observe that the random variable $X$ can be considered as a random variable  on the so-called  doubling $2M$ of $M$. The restriction of the so-defined Fr\'echet function on $2M$ to $M$ is the original Fr\'echet function and all minima of this function in $2M$ are automatically contained in $M$. 

An argument very similar to the one used in 
the proof of  Corollary \ref{cor:zero-derivative-at-mean}  shows that a vanishing derivative $D_{\mu} F$ of the Fr\'echet function at a Fr\'echet mean implies that for $\Prb^X$-almost all $x\in M$, the derivative $D_{\mu} d^2_x$ must be an affine map, cf.  \cite{lange2018affine} for  the complete description of affine maps on Alexandrov spaces.
In particular, the first variation formula shows that for $\Prb^X$-almost all $x\in M$, the points $x$ and $\mu$ are connected by exactly one shortest curve, which can be seen as the direct analogue of Theorem 
\ref{thm: le}:
\begin{Th}
Under the assumptions and notations of Theorem \ref{Th: last}, for $\Prb^X$-almost every point 
$x\in M$ there exists exactly one shortest curve connecting $x$ and $\mu$.
\end{Th}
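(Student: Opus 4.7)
The plan is to parallel the Riemannian chain Corollary \ref{cor:zero-derivative-at-mean} $\Rightarrow$ Theorem \ref{thm: le} $\Rightarrow$ Corollary \ref{cor: +} in the Alexandrov setting. By Theorem \ref{Th: last} the differential $D_{\mu}F\colon T_{\mu}M\to\RR$ is the zero map, hence in particular an affine map on the tangent cone. The excerpt asserts that all results from Sections 2--4 transfer verbatim to Alexandrov spaces with lower curvature bound, once one records that the relevant $b$-concave functions are locally Lipschitz (which the squared distance functions $d_x^2$ are, uniformly for $x$ in a bounded region, by Toponogov comparison). Thus the integration/differentiation interchange of Lemma \ref{lem: new} and Lemma \ref{lem:interchange-der-integral} yields
\[
D_\mu F(v) \;=\; \int_M D_\mu(d_x^2)(v)\,d\Prb^X(x) \qquad (v\in T_\mu M),
\]
and Corollary \ref{cor: verylast} then forces $D_\mu(d_x^2)$ to be an affine function on $T_\mu M$ for $\Prb^X$-almost every $x\in M$.

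Next I would translate affineness of $D_\mu(d_x^2)$ into uniqueness of the shortest curve from $\mu$ to $x$ via the Alexandrov first variation formula. For $x\neq\mu$, Petrunin's first variation formula (see \cite{petrunin06semiconcave, alexander2024alexandrov}) gives
\[
D_\mu d_x(v) \;=\; -\sup_{\xi\in\Uparrow_\mu^x}\,\langle v,\xi\rangle,
\]
where $\Uparrow_\mu^x\subset T_\mu M$ is the set of unit initial directions of shortest curves from $\mu$ to $x$, and $\langle\cdot,\cdot\rangle$ denotes the canonical scalar product on the tangent cone (cosine of comparison angle, scaled by norms). Multiplying by $-2d(\mu,x)$ expresses $D_\mu d_x^2$ as a negative supremum of linear functionals on $T_\mu M$ of equal norm, precisely in analogy with (\ref{eq:derivative-distance}). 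By the classification of affine functions on Alexandrov tangent cones in \cite{lange2018affine}, such a concave function can be affine only if the supremum is realized by a single $\xi$, i.e., $|\Uparrow_\mu^x|=1$; but this is exactly uniqueness of the shortest curve from $\mu$ to $x$. Combining with the previous paragraph yields the theorem; the atom $x=\mu$ is trivial, since then the (constant) shortest curve is unique.

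The main obstacle is the last equivalence: in the Alexandrov setting $T_\mu M$ is a cone rather than a vector space, and "affine" must be interpreted in the sense of \cite{lange2018affine}, so one cannot invoke the elementary Riemannian fact used in Lemma \ref{lem: distance} that a negative supremum of linear functionals of the same norm is linear precisely when a single one participates. Instead one must use Lange's classification to rule out that several distinct directions $\xi\in\Uparrow_\mu^x$ could conspire to produce an affine envelope on the cone. A secondary but purely technical point is checking integrability in $x$ of the local semiconcavity and Lipschitz constants of $d_x^2$ near $\mu$, but this is handled exactly as in Lemma \ref{lem: Cr} using the lower curvature bound.
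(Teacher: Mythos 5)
Your proposal follows the same route the paper takes: use $D_\mu F\equiv 0$ from Theorem \ref{Th: last}, push the vanishing through the integral via the Alexandrov analogues of Lemma \ref{lem:interchange-der-integral} and Corollary \ref{cor: verylast} to conclude that $D_\mu d_x^2$ is affine for $\Prb^X$-almost all $x$, and then invoke the first variation formula together with Lange's description of affine maps to translate affineness of $D_\mu d_x^2$ into uniqueness of the shortest curve from $\mu$ to $x$. The paper's own proof is only a two-sentence sketch making exactly these moves, so your write-up is consistent with it and adds welcome explicitness (the form of Petrunin's first variation formula, and the fact that $T_\mu M$ is a cone rather than a vector space so that ``affine'' must be read in Lange's sense).

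One small point worth flagging: the final implication ``affine $\Rightarrow$ unique initial direction'' does not actually require the full strength of Lange's classification. Writing $a(v):=\sup_{\xi\in\Uparrow_\mu^x}\langle v,\xi\rangle$, one has $a(\xi)=1$ for every $\xi\in\Uparrow_\mu^x$ and $a\le \|\cdot\|$ everywhere; if $a$ were affine and $\xi_1\ne\xi_2$ were two such directions, affineness along the geodesic $[\xi_1,\xi_2]$ together with $1$-homogeneity and the cone structure (the midpoint of $[\xi_1,\xi_2]$ has norm strictly less than $1$) gives $a(m/\|m\|)>1$, a contradiction. So the contribution of \cite{lange2018affine} is best viewed as structural context (characterizing when nonconstant affine functions on the tangent cone can exist at all), as the paper's ``cf.'' suggests, rather than as the engine of the uniqueness step. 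There is also a minor sign slip in ``Multiplying by $-2d(\mu,x)$'': the chain rule gives $D_\mu d_x^2 = 2d(\mu,x)\,D_\mu d_x$, so the multiplier is $+2d(\mu,x)$; the resulting formula you state is nevertheless correct.
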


On the other hand,  the role of $\fC^-_{\mu}$ and higher order properties of the Fr\'echet function $F$ are much less clear.  While the set $\fC^+_{q}$  of points in $M$ connected to $q\in  M$ by more than one shortest curve  has codimension one in measure-theoretic sense, its closure can be (and typically is) all of $M$ \citep{zamfirescu1982many}!


\bibliographystyle{Chicago}
\bibliography{shape,diffgeo,trees}

\begin{thebibliography}{}

\bibitem[\protect\citeauthoryear{Alexander, Kapovitch, and Petrunin}{Alexander
  et~al.}{2024}]{alexander2024alexandrov}
Alexander, S., V.~Kapovitch, and A.~Petrunin (2024).
\newblock {\em Alexandrov geometry: foundations}, Volume 236.
\newblock American Mathematical Society.

\bibitem[\protect\citeauthoryear{Bac{\'a}k}{Bac{\'a}k}{2014}]{bacak2014computing}
Bac{\'a}k, M. (2014).
\newblock Computing medians and means in {H}adamard spaces.
\newblock {\em SIAM journal on optimization\/}~{\em 24\/}(3), 1542--1566.

\bibitem[\protect\citeauthoryear{Bangert}{Bangert}{1979}]{bangert1979analytische}
Bangert, V. (1979).
\newblock {Analytische Eigenschaften konvexer Funktionen auf Riemannschen
  Mannigfaltigkeiten}.
\newblock {\em Journal f{\"u}r die reine und angewandte Mathematik\/}~{\em
  307}, 309--324.

\bibitem[\protect\citeauthoryear{Barden, Le, and Owen}{Barden
  et~al.}{2018}]{barden2018limiting}
Barden, D., H.~Le, and M.~Owen (2018).
\newblock Limiting behaviour of {F}r{\'e}chet means in the space of
  phylogenetic trees.
\newblock {\em Annals of the Institute of Statistical Mathematics\/}~{\em
  70\/}(1), 99--129.

\bibitem[\protect\citeauthoryear{Basrak}{Basrak}{2010}]{basrak2010limit}
Basrak, B. (2010).
\newblock Limit theorems for the inductive mean on metric trees.
\newblock {\em Journal of applied probability\/}~{\em 47\/}(4), 1136--1149.

\bibitem[\protect\citeauthoryear{Bhattacharya and Lin}{Bhattacharya and
  Lin}{2017}]{BL17}
Bhattacharya, R. and L.~Lin (2017).
\newblock Omnibus {CLT}s for {F}r{\'e}chet means and nonparametric inference on
  non-{E}uclidean spaces.
\newblock {\em Proceedings of the American Mathematical Society\/}~{\em
  145\/}(1), 413--428.

\bibitem[\protect\citeauthoryear{Bishop}{Bishop}{1977}]{bishop1977decomposition}
Bishop, R.~L. (1977).
\newblock Decomposition of cut loci.
\newblock {\em Proceedings of the American Mathematical Society\/}~{\em
  65\/}(1), 133--136.

\bibitem[\protect\citeauthoryear{Braitenberg and Sch{\"u}z}{Braitenberg and
  Sch{\"u}z}{2013}]{braitenberg2013anatomy}
Braitenberg, V. and A.~Sch{\"u}z (2013).
\newblock {\em Anatomy of the cortex: statistics and geometry}, Volume~18.
\newblock Springer Science \& Business Media.

\bibitem[\protect\citeauthoryear{Burago, Burago, and Ivanov}{Burago
  et~al.}{2022}]{burago2022course}
Burago, D., Y.~Burago, and S.~Ivanov (2022).
\newblock {\em A course in metric geometry}, Volume~33.
\newblock American Mathematical Society.

\bibitem[\protect\citeauthoryear{Calabi}{Calabi}{1958}]{calabi1958extension}
Calabi, E. (1958).
\newblock An extension of e. hopf’s maximum principle with an application to
  riemannian geometry.
\newblock {\em Duke Math. J.\/}~{\em 25\/}(1), 45--56.

\bibitem[\protect\citeauthoryear{Dryden and Mardia}{Dryden and
  Mardia}{2016}]{DM16}
Dryden, I.~L. and K.~V. Mardia (2016).
\newblock {\em Statistical Shape Analysis\/} (2nd ed.).
\newblock Chichester: Wiley.

\bibitem[\protect\citeauthoryear{Eltzner, Galaz-Garc\'ia, Huckemann, and
  Tuschmann}{Eltzner et~al.}{2021}]{EltznerGalazHuckemannTuschmann21}
Eltzner, B., F.~Galaz-Garc\'ia, S.~F. Huckemann, and W.~Tuschmann (2021).
\newblock Stability of the cut locus and a central limit theorem for
  {F}r\'echet means of {R}iemannian manifolds.
\newblock {\em Proceedings of the American Mathematical Society\/}~{\em
  149\/}(9), 3947–3963.

\bibitem[\protect\citeauthoryear{Fr\'echet}{Fr\'echet}{1948}]{F48}
Fr\'echet, M. (1948).
\newblock Les \'el\'ements al\'eatoires de nature quelconque dans un espace
  distanci\'e.
\newblock {\em Annales de l'Institut de Henri Poincar\'e\/}~{\em 10\/}(4),
  215--310.

\bibitem[\protect\citeauthoryear{Galaz-Garc{\'\i}a and
  Guijarro}{Galaz-Garc{\'\i}a and Guijarro}{2015}]{galaz2015every}
Galaz-Garc{\'\i}a, F. and L.~Guijarro (2015).
\newblock Every point in a {R}iemannian manifold is critical.
\newblock {\em Calculus of Variations and Partial Differential
  Equations\/}~{\em 54\/}(2), 2079--2084.

\bibitem[\protect\citeauthoryear{G{\'e}n{\'e}rau}{G{\'e}n{\'e}rau}{2020}]{generau2020laplacian}
G{\'e}n{\'e}rau, F. (2020).
\newblock Laplacian of the distance function on the cut locus on a {R}iemannian
  manifold.
\newblock {\em Nonlinearity\/}~{\em 33\/}(8), 3928.

\bibitem[\protect\citeauthoryear{G{\'e}n{\'e}rau, Oudet, and
  Velichkov}{G{\'e}n{\'e}rau et~al.}{2022}]{GOV22}
G{\'e}n{\'e}rau, F., E.~Oudet, and B.~Velichkov (2022).
\newblock Cut locus on compact manifolds and uniform semiconcavity estimates
  for a variational inequality.
\newblock {\em Archive for Rational Mechanics and Analysis\/}~{\em 246\/}(2),
  561--602.

\bibitem[\protect\citeauthoryear{Green}{Green}{1963}]{green1963wiedersehensflachen}
Green, L.~W. (1963).
\newblock Auf {W}iedersehensfl{\"a}chen.
\newblock {\em Annals of Mathematics\/}~{\em 78\/}(2), 289--299.

\bibitem[\protect\citeauthoryear{Groisser, Jung, and Schwartzman}{Groisser
  et~al.}{2023}]{groisser2023genericity}
Groisser, D., S.~Jung, and A.~Schwartzman (2023).
\newblock A genericity property of {F}r\'echet sample means on {R}iemannian
  manifolds.
\newblock {\em arXiv preprint arXiv:2309.13823\/}.

\bibitem[\protect\citeauthoryear{Grove}{Grove}{1993}]{grove1993critical}
Grove, K. (1993).
\newblock Critical point theory for distance functions.
\newblock {\em Amer. Math. Soc., Proc. Sympos. Pure Math.\/}~{\em 54},
  357--385.

\bibitem[\protect\citeauthoryear{Hotz and Huckemann}{Hotz and
  Huckemann}{2011}]{HH11}
Hotz, T. and S.~Huckemann (2011).
\newblock Intrinsic means on the circle: Uniqueness, locus and asymptotics.
\newblock {\em ~arXiv: 1108.2141\/}.

\bibitem[\protect\citeauthoryear{Hotz, Huckemann, Le, Marron, Mattingly,
  Miller, Nolen, Owen, Patrangenaru, and Skwerer}{Hotz et~al.}{2013}]{HHMMN13}
Hotz, T., S.~Huckemann, H.~Le, J.~S. Marron, J.~Mattingly, E.~Miller, J.~Nolen,
  M.~Owen, V.~Patrangenaru, and S.~Skwerer (2013).
\newblock Sticky central limit theorems on open books.
\newblock {\em Annals of Applied Probability\/}~{\em 23\/}(6), 2238--2258.

\bibitem[\protect\citeauthoryear{Hotz, Le, and Wood}{Hotz
  et~al.}{2024}]{hotz2024central}
Hotz, T., H.~Le, and A.~T. Wood (2024).
\newblock Central limit theorem for intrinsic {F}r{\'e}chet means in smooth
  compact {R}iemannian manifolds.
\newblock {\em Probability Theory and Related Fields\/}~{\em 189\/}(3),
  1219--1246.

\bibitem[\protect\citeauthoryear{Huckemann, Mattingly, Miller, and
  Nolen}{Huckemann et~al.}{2015}]{H_Mattingly_Miller_Nolen2015}
Huckemann, S., J.~C. Mattingly, E.~Miller, and J.~Nolen (2015).
\newblock Sticky central limit theorems at isolated hyperbolic planar
  singularities.
\newblock {\em Electronic Journal of Probability\/}~{\em 20\/}(78), 1--34.

\bibitem[\protect\citeauthoryear{Kapovitch and Lytchak}{Kapovitch and
  Lytchak}{2021}]{kapovitch2021remarks}
Kapovitch, V. and A.~Lytchak (2021).
\newblock Remarks on manifolds with two-sided curvature bounds.
\newblock {\em Analysis and Geometry in Metric Spaces\/}~{\em 9\/}(1), 53--64.

\bibitem[\protect\citeauthoryear{Kendall and Le}{Kendall and
  Le}{2011}]{KendallLe2011}
Kendall, W.~S. and H.~Le (2011).
\newblock Limit theorems for empirical {F}r{\'e}chet means of independent and
  non-identically distributed manifold-valued random variables.
\newblock {\em Brazilian Journal of Probability and Statistics\/}~{\em
  25\/}(3), 323--352.

\bibitem[\protect\citeauthoryear{Lammers, Nye, and Huckemann}{Lammers
  et~al.}{2024}]{lammers2024testing}
Lammers, L., T.~M. Nye, and S.~F. Huckemann (2024).
\newblock Statistics for phylogenetic trees in the presence of stickiness.
\newblock arXiv 2407.03977.

\bibitem[\protect\citeauthoryear{Lammers, Van, and Huckemann}{Lammers
  et~al.}{2023}]{lammers2023sticky}
Lammers, L., D.~T. Van, and S.~F. Huckemann (2023).
\newblock Sticky flavors.
\newblock {\em arXiv preprint arXiv:2311.08846\/}.

\bibitem[\protect\citeauthoryear{Lange and Stadler}{Lange and
  Stadler}{2018}]{lange2018affine}
Lange, C. and S.~Stadler (2018).
\newblock Affine functions on {A}lexandrov spaces.
\newblock {\em Mathematische Zeitschrift\/}~{\em 289\/}(1), 455--469.

\bibitem[\protect\citeauthoryear{Le and Barden}{Le and
  Barden}{2014}]{LeBarden2014}
Le, H. and D.~Barden (2014).
\newblock On the measure of the cut locus of a {F}r{\'e}chet mean.
\newblock {\em Bulletin of the London Mathematical Society\/}~{\em 46\/}(4),
  698--708.

\bibitem[\protect\citeauthoryear{Lueg, Garba, Nye, Lueg, and Huckemann}{Lueg
  et~al.}{2024}]{lueg2024foundations}
Lueg, J., M.~K. Garba, T.~M. Nye, J.~Lueg, and S.~F. Huckemann (2024).
\newblock Foundations of wald space for statistics of phylogenetic trees.
\newblock {\em Journal of the London Mathematical Society\/}~{\em 109\/}(5),
  e12893.

\bibitem[\protect\citeauthoryear{Mantegazza, Mascellani, and
  Uraltsev}{Mantegazza et~al.}{2014}]{mantegazza2014distributional}
Mantegazza, C., G.~Mascellani, and G.~Uraltsev (2014).
\newblock On the distributional {H}essian of the distance function.
\newblock {\em Pacific Journal of Mathematics\/}~{\em 270\/}(1), 151--166.

\bibitem[\protect\citeauthoryear{Mantegazza and Mennucci}{Mantegazza and
  Mennucci}{2003}]{mantegazza2003hamilton}
Mantegazza, C. and A.~C. Mennucci (2003).
\newblock {Hamilton--Jacobi equations and distance functions on Riemannian
  manifolds}.
\newblock {\em Appl Math Optim\/}~{\em 47}, 1--25.

\bibitem[\protect\citeauthoryear{Marron and Dryden}{Marron and
  Dryden}{2021}]{marron2021object}
Marron, J.~S. and I.~L. Dryden (2021).
\newblock {\em Object Oriented Data Analysis}.
\newblock Chapman and Hall/CRC.

\bibitem[\protect\citeauthoryear{Mattingly, Miller, and Tran}{Mattingly
  et~al.}{2023a}]{mattingly2023central}
Mattingly, J.~C., E.~Miller, and D.~Tran (2023a).
\newblock A central limit theorem for random tangent fields on stratified
  spaces.
\newblock {\em arXiv preprint arXiv:2311.09454\/}.

\bibitem[\protect\citeauthoryear{Mattingly, Miller, and Tran}{Mattingly
  et~al.}{2023b}]{tran2023central}
Mattingly, J.~C., E.~Miller, and D.~Tran (2023b).
\newblock Central limit theorems for {F}r\'echet means on stratified spaces.
\newblock {\em arXiv preprint arXiv:2311.09455\/}.

\bibitem[\protect\citeauthoryear{Mattingly, Miller, and Tran}{Mattingly
  et~al.}{2023c}]{mattingly2023geometry}
Mattingly, J.~C., E.~Miller, and D.~Tran (2023c).
\newblock Geometry of measures on smoothly stratified metric spaces.
\newblock {\em arXiv preprint arXiv:2311.09453\/}.

\bibitem[\protect\citeauthoryear{Mattingly, Miller, and Tran}{Mattingly
  et~al.}{2023d}]{mattingly2023shadow}
Mattingly, J.~C., E.~Miller, and D.~Tran (2023d).
\newblock Shadow geometry at singular points of {CAT}($\kappa$) spaces.
\newblock {\em arXiv preprint arXiv:2311.09451\/}.

\bibitem[\protect\citeauthoryear{Myers}{Myers}{1935}]{myers1935connections}
Myers, S.~B. (1935).
\newblock Connections between differential geometry and topology. i. simply
  connected surfaces.
\newblock {\em Duke Math. J\/}~{\em 1}, 376--391.

\bibitem[\protect\citeauthoryear{Myers}{Myers}{1936}]{myers1936connections}
Myers, S.~B. (1936).
\newblock Connections between differential geometry and topology ii. closed
  surfaces.
\newblock {\em Duke Math. J\/}~{\em 2}, 95--102.

\bibitem[\protect\citeauthoryear{Nye, Tang, Weyenberg, and Yoshida}{Nye
  et~al.}{2017}]{NyeTangWeyenbergYoshida2017}
Nye, T.~M., X.~Tang, G.~Weyenberg, and R.~Yoshida (2017).
\newblock Principal component analysis and the locus of the {F}r{\'e}chet mean
  in the space of phylogenetic trees.
\newblock {\em Biometrika\/}~{\em 104\/}(4), 901--922.

\bibitem[\protect\citeauthoryear{Pennec}{Pennec}{2006}]{Pn06}
Pennec, X. (2006).
\newblock Intrinsic statistics on {R}iemannian manifolds: Basic tools for
  geometric measurements.
\newblock {\em J. Math. Imaging Vis.\/}~{\em 25\/}(1), 127--154.

\bibitem[\protect\citeauthoryear{Petrunin}{Petrunin}{2006}]{petrunin06semiconcave}
Petrunin, A. (2006).
\newblock Semiconcave functions in {A}lexandrov’s geometry.
\newblock {\em Surveys in Differential Geometry\/}~{\em 11\/}(1), 137--202.

\bibitem[\protect\citeauthoryear{Sakai}{Sakai}{1996}]{Sakai1996}
Sakai, T. (1996).
\newblock {\em Riemannian geometry}, Volume 149.
\newblock Amer Mathematical Society.

\bibitem[\protect\citeauthoryear{Sturm}{Sturm}{2003a}]{Sturm2003}
Sturm, K. (2003a).
\newblock Probability measures on metric spaces of nonpositive curvature.
\newblock {\em Contemporary mathematics\/}~{\em 338}, 357--390.

\bibitem[\protect\citeauthoryear{Sturm}{Sturm}{2003b}]{sturm2003probability}
Sturm, K.-T. (2003b).
\newblock Probability measures on metric spaces of nonpositive curvature, heat
  kernels and analysis on manifolds, graphs, and metric spaces.
\newblock {\em Contemporary Mathematics\/}~{\em 338}, 357--390.

\bibitem[\protect\citeauthoryear{Zamfirescu}{Zamfirescu}{1982}]{zamfirescu1982many}
Zamfirescu, T. (1982).
\newblock Many endpoints and few interior points of geodesics.
\newblock {\em Invent. Math.\/}~{\em 69\/}(2), 253--257.

\end{thebibliography}

\end{document}